\title{On the Prague dimension of sparse random graphs}
\author{Felix Joos}
\author{Letícia Mattos}
\address{Institut für Informatik, Universität Heidelberg, Im Neuenheimer Feld 205, 69120 Heidelberg, Germany.}\email{joos|mattos@uni-heidelberg.de}
\date{}
\thanks{The research leading to these results was partially supported by the Deutsche Forschungsgemeinschaft (DFG, German Research Foundation) -- 428212407.
}
\newtheorem{theorem}{Theorem}
\newtheorem{lemma}[theorem]{Lemma}
\numberwithin{theorem}{section}
\theoremstyle{remark}
\def\E{\mathcal{E}}
\def\F{\mathcal{F}}
\def\se{\subseteq}
\def\td{\widetilde{d}}
\def\pdim{\mathrm{pdim}}
\def\idim{\mathrm{idim}}
\newcommand{\eps}{\varepsilon}
\renewcommand{\Pr}[1]{\mathbb{P}\left[#1\right]}
\newcommand{\Ex}[1]{\mathbb{E}\left[#1\right]}
\definecolor{lblue}{rgb}{0.5,0.5,1}
\begin{document}

\vspace{-0.5cm}

\begin{abstract}

	The Prague dimension of a graph $G$ is defined as the minimum number of complete graphs whose direct product contains $G$ as an induced subgraph. 
	Introduced in the 1970s by Nešetřil, Pultr, and Rödl -- and motivated by the work of Dushnik and Miller, as well as by the induced Ramsey theorem -- determining the Prague dimension of a graph is a notoriously hard problem.
	In this paper, we show that for all $\eps > 0$ and $p$ such that $ n^{-1+\eps} \le p \le n^{-\eps}$, with high probability the Prague dimension of $G_{n,p}$ is $\Theta_{\eps}(pn)$, which improves upon a recent result by Molnar, Rödl, Sales and Schacht.

	Inspired by the work of Bennett and Bohman, our approach centres on analysing a random greedy process that builds an independent set of size $\Omega(p^{-1}\log pn)$ by iteratively selecting vertices uniformly at random from the common non-neighbourhood of those already chosen.
	Using the differential equation method, we show that every non-edge is essentially equally likely to be covered by this process, which is key to establishing our bound.

\end{abstract}

\maketitle

\section{Introduction}

Motivated by the work of Dushnik and Miller, as well as by the induced Ramsey theorem~\cite{Deuber,ErdosHajnalPosa,Rodl},
Nešetřil, Pultr and Rödl~\cite{Nesetril1977-pj,Nesetril1978-km} introduced the notion of the Prague dimension of a graph in the 1970s.
For a graph $G$, the Prague dimension $\text{pdim}(G)$ is defined as the minimum number of complete graphs whose direct product contains $G$ as an induced subgraph.
This is closely related to $\text{idim}(G)$, defined as the minimum number of partial clique factors whose union equals $\overline{G}$, the complement of $G$.
In fact, 
Nešetřil and Rödl~\cite{Nesetril1978-km} showed that for every graph $G$, we have
\begin{align}\label{eq:pdim-idim}
	\idim(G) \le \pdim(G) \le \idim(G) + 1. 
\end{align}

In 1977, Lovász, Nešetřil, and Pultr~\cite{Lovasz1980-kw} showed that a graph on $n\ge2$ vertices has Prague dimension at most $n-1$ and classified all graphs which achieve this bound.
General upper and lower bounds were obtained by Alon~\cite{Alon1986-oc}, who proved that if $G$ is a graph on $n$ vertices with minimum degree at least 1 and maximum degree $\Delta$, then $\log (n/ \Delta)/ \log 2 \le \text{pdim}(G) = O(\Delta^2\log n)$; all our asymptotic expressions are meant as $n \to \infty$, and all logarithms are natural.
The upper bound was later improved by Eaton and Rödl~\cite{Eaton1996-ap}, who showed that 
\begin{align*}
	\text{pdim}(G) = O(\Delta \log n).
\end{align*}
Moreover, they proved that this is best possible up to a factor of $1/(\log \Delta + \log \log (n/2\Delta) )$.
When $\Delta$ is a constant, these results provide the order of magnitude of $\text{pdim}(G)$.
Precise asymptotics are known for matchings, trees and cycles~\cite{Alon1986-oc,Lovasz1980-kw,Poljak1981-lt}, hypercubes~\cite{Krivka} and Kneser graphs $K(n,k)$ for constant values of $k$~\cite{Poljak1978-um}.
For more details, we refer the reader to~\cite{Kantor}.

Nešetřil and Rödl~\cite{Nesetril1983-ju} pioneered the study of the Prague dimension of random graphs by showing that the Prague dimension of $G_{n,p}$ is $\Omega_p(n/\log n)$ with high probability for constant values of $p$.
Many years later, Guo, Patton and Warnke~\cite{Guo2023-wq} managed to obtain a matching upper bound by using a Pippenger--Spencer~\cite{Pippenger1989-ak} type
edge-colouring result for random hypergraphs with large uniformities.
More recently, Molnar, Rödl, Sales and Schacht~\cite{MolnarRodlSalesSchacht} studied the Prague dimension of sparse random graphs.
Via a double counting argument, they showed that for $p \gg n^{-2}$, we have\footnote{For any two functions $f$ and $g$, we write $f \ll g$ to mean $f = o(g)$.}
\begin{align}\label{eq:lower-bound-MRSS}
	\text{pdim}(G_{n,p}) \ge \dfrac{pn \log \frac{1}{p} }{5\log n }
\end{align}
with high probability.
Furthermore, by analysing the distribution of independent sets in~$G_{n,p}$, they showed that for all $\eps > 0$ and $p$ such that $n^{-1/3} \log^{4/3} n \ll p \ll n^{-\eps}$, we have $\text{pdim}(G_{n,p}) = \Theta_{\eps}(pn)$ with high probability.
In our first main result, we prove an upper bound that is valid for almost the entire range of $p$.

\begin{theorem}\label{thm:main-prague}
	Let\footnote{In all our statements, $\eps$ is a fixed constant and $n$ tends to infinity.} $\eps \in (0,1)$ and $(\log n)^{2+\eps} n^{-1} \le p < n^{-\eps}$. 
	Then, with high probability we have
	\begin{align*}
		\mathrm{pdim}(G_{n,p}) = O_{\eps} \left ( \dfrac{pn \log n}{\log pn} \right ).
	\end{align*}
\end{theorem}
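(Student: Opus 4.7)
By the second inequality in~\eqref{eq:pdim-idim}, it suffices to prove the bound for $\idim(G_{n,p})$, i.e., to cover every non-edge of $G := G_{n,p}$ by $k := C_\eps \cdot pn\log n/\log(pn)$ partial clique factors of $\overline{G}$ — equivalently, by $k$ partitions of (subsets of) $V(G)$ into independent sets of $G$. Since the greedy independent-set process in $G_{n,p}$ is known to produce independent sets of size $s := (2-o(1))p^{-1}\log(pn)$, one expects each such partition to cover a $\Theta(s/n)$-fraction of the non-edges, so that $\Theta(n\log n/s) = \Theta(pn\log n/\log(pn))$ independent partitions should suffice by a coupon-collector--style union bound, provided the per-partition coverage probability is approximately uniform across non-edges.

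\textbf{Greedy process and partitioning.} Following Bennett--Bohman, I would analyse the process $I_0\subset I_1\subset\cdots$ with $I_0=\emptyset$, where at step $t+1$ a vertex $v_{t+1}$ is sampled uniformly from $Q_t := V(G)\setminus(I_t\cup N(I_t))$, stopping when $Q_t=\emptyset$. The differential equation method tracks $|Q_t|/n$ against a deterministic trajectory behaving like $(1-p)^{2t}$, and under $p \ge (\log n)^{2+\eps}/n$ the error terms remain $o(1)$ long enough to guarantee a stopping time of $(1-o(1))s$ with high probability. Iterating the process — run on $V(G)$, then on $V(G)\setminus I^{(1)}$, and so on, placing any leftover vertices into singleton parts — produces a random partition $\pi$ of $V(G)$ into independent sets, i.e., a partial clique factor of $\overline{G}$. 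The technical heart of the argument is the \emph{uniform coverage} claim: for every non-edge $\{u,w\}$ of $G$,
\begin{align*}
\Pr{u\text{ and }w\text{ lie in a common part of }\pi} \;\ge\; c\cdot s/n
\end{align*}
for an absolute constant $c>0$. To establish this, one tracks, via the differential equation method applied to a refined system of statistics, the joint evolution of the "status" of $u$ and $w$ relative to $I_t$ (selected, still alive in $Q_t$, or killed by a neighbour in $I_t$), and shows that conditioning on $\{u,w\}$ being a non-edge biases these statistics by only a $(1\pm o(1))$ factor throughout the bulk of the process.

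\textbf{Independent partitions and union bound.} Draw $k$ independent copies $\pi_1,\ldots,\pi_k$ of the above partitioning procedure. For each of the $\le \binom{n}{2}$ non-edges $\{u,w\}$, the probability that none of the $\pi_i$ covers it is at most $(1-cs/n)^k \le \exp(-cks/n) \le n^{-3}$, once $C_\eps$ is chosen sufficiently large. A union bound then yields that with high probability every non-edge is covered by at least one $\pi_i$, so $\idim(G) \le k$ with high probability, and the claimed bound on $\pdim(G_{n,p})$ follows via~\eqref{eq:pdim-idim}.

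\textbf{Main obstacle.} The main difficulty is the uniform coverage claim. Standard Bennett--Bohman-style analyses track only the scalar $|Q_t|$, but here one needs a vector of joint statistics that captures the correlated evolution of the common non-neighbourhoods of $u$ and $w$; one must then verify the associated system of differential equations is well-posed and that the error terms remain negligible across the entire timescale $[0,s]$, and that this control survives the iterative application on successive residual subsets. A further subtlety is the tail of each run, where $|Q_t|$ drops to polylogarithmic size and the differential equation approximation degrades: one must either argue by a separate tail analysis that few non-edges are first covered in this regime, or absorb the loss into a slightly larger value of $C_\eps$.
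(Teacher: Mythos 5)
Your overall strategy is the same as the paper's: reduce to $\idim$ via~\eqref{eq:pdim-idim}, show that each random greedy independent set covers every fixed non-edge with probability $(1+o(1))(k/n)^2$ where $k=\Theta_\eps(p^{-1}\log pn)$ is the number of steps the process is run (this is exactly the paper's Theorem~\ref{thm:prob-non-edge}, proved via a differential-equation/supermartingale analysis of all vertex degrees in the surviving graph), and then take a union bound over non-edges with $\Theta_\eps(nk^{-1}\log n)$ partitions. The one place where you genuinely diverge, and where your version has a real gap, is the construction of each partition: you propose to iterate the process on successive residual vertex sets $V(G)$, $V(G)\setminus I^{(1)}$, and so on. As you yourself note, the residual graphs are conditioned on the earlier runs and need not satisfy the typicality properties on which the DEM analysis rests, so the uniform-coverage estimate would have to be re-proved on each residual instance — a substantial and unresolved difficulty. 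The paper sidesteps this entirely: each partition $\mathcal{I}_i$ is built from $s=n/k$ \emph{independent} copies $J_{i,1},\dots,J_{i,s}$ of $I_k(G)$, all run on the full graph $G$, made disjoint after the fact by setting $I_{i,j}=J_{i,j}\setminus(J_{i,1}\cup\dots\cup J_{i,j-1})$. Independence then turns the per-partition coverage probability into the clean geometric sum~\eqref{eq:prob-non-edge}, which evaluates to $\Theta_\eps(k/n)$ with no conditioning issues. Your "tail of the run" worry also disappears with this construction, since the process is simply stopped after $k$ steps, well before $|Q_t|$ degenerates. (Minor point: the trajectory of $|Q_t|/n$ is $(1-p)^{t}$, not $(1-p)^{2t}$, since a fixed active vertex survives a step with probability about $1-p$; this does not affect the order of magnitude of your final count.)
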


In particular, by combining our result with the lower bound in~\eqref{eq:lower-bound-MRSS}, we obtain that for all $\eps > 0$ and $p$ such that $ n^{-1+\eps} \le p \le n^{-\eps}$, with high probability the Prague dimension of $G_{n,p}$ is $\Theta_{\eps}(pn)$.
As it is crucial for our analysis to have concentration on the degrees of all vertices in $G_{n,p}$, we note that $n^{-1}\log n$ is a natural barrier for the range of $p$ our method can handle.

To prove Theorem~\ref{thm:main-prague}, instead of working directly with the Prague dimension, we use~\eqref{eq:pdim-idim} and upper bound $\idim(G_{n,p})$.
Our main idea is to select independent sets with size proportional to $\alpha(G_{n,p})$ via a random greedy process which takes one vertex at a time uniformly at random from the common non-neighbourhood of the vertices already chosen.
We then control the probability that a given vertex and a given non-edge are covered by an independent set generated by our process.

Our strategy is inspired by the work of Bennett and Bohman~\cite{Bennett2016-di}, who use a random greedy independent set process to lower bound the independence number of regular hypergraphs under certain degree and codegree conditions.
For graphs, we also improve on their work by handling pseudorandom graphs with logarithmic degrees rather than regular graphs with degrees growing polynomially with the number of vertices.
Our analysis is done via the differential equation method, which was popularised in the combinatorial community by Wormald~\cite{Wormald-1,Wormald-2}.
As can be seen in the references given in~\cite{Wormald-2}, the roots of the method may be traced back further, see for example the works of Kurtz~\cite{Kurtz1970-aa}
and Karp and Sipser~\cite{Karp1981-aa}.

Our methods also allow us to bound the minimum number of independent sets needed to cover all the non-edges of $G=G_{n,p}$, denoted by $\overline{\theta}_1(G)$.
Initially, one may conjecture that with high probability $\overline{\theta}_1(G)$ should be of order $n^2/\alpha(G)^2$  for all values of $p$, which corresponds to the size of a `near' optimal clique cover of $\overline{G}$.
Frieze and Reed~\cite{Frieze1995-xa}, by improving the results of Bollobás, Erd\H{o}s, Spencer, and West~\cite{Bollobas1993-nl}, showed that this is indeed the case for constant values of $p$.
Surprisingly, this is not true for the sparse range.
In~\cite{Guo2023-wq}, Guo, Patton and Warnke proved that for any fixed $\eps >0$ and $n^{-1} \ll p \le 1-\eps$, with high probability we have
\begin{align*}
	\overline{\theta}_1(G_{n,p}) = \Omega \left ( \dfrac{n^2 \log \frac{1}{p} }{\alpha^2(G_{n,p})} \right ).
\end{align*}
It is worth mentioning that due to the work of Matula~\cite{Matula1970-ka}, Bollobás and Erd\H{o}s~\cite{Bollobas1976-al}, Grimmett and McDiarmid~\cite{Grimmett1975-vs}, and Frieze~\cite{Frieze1990-cu}, we know that the size of the largest independent set in $G_{n,p}$ is of order $\Theta(p^{-1}\log pn)$ for $n^{-1} \ll p \le 1- \eps$, for any fixed $\eps > 0$.
Also note that
$\log \frac{1}{p} = \Theta_{\eps}(\log n)$ for $ n^{-1}\ll p \ll n^{-\eps}$. 

Recently, Molnar, Rödl, Sales and Schacht~\cite{MolnarRodlSalesSchacht} obtained a matching upper bound for every $\eps > 0$ and $p$ such that $n^{-1/2} \log^{5/4} n \ll p \ll n^{-\eps}$. 
Their argument heavily relies on the fact that the number of independent sets in $G_{n,p}$ is concentrated around its expectation with high probability.
As this no longer holds for $p \ll n^{-1/2}$ (see Corollary~19 in~\cite{Coja-Efthymiou}), there is a barrier at $n^{-1/2}$ for the range of $p$ their method can handle.
Our second main theorem overcomes this natural barrier and extends their result to a much wide range of~$p$.

\begin{theorem}\label{thm:main}
	Let $\eps\in (0,1)$ and $(\log n)^{2+\eps} n^{-1} \le p < n^{-\eps}$.
	Then, with high probability we have
	\begin{align*}
		\overline{\theta}_1(G_{n,p}) = \Theta_{\eps} \left ( \dfrac{n^2\log n}{\alpha^2(G_{n,p})} \right ).
	\end{align*}
\end{theorem}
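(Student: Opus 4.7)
The plan is to establish the upper bound $\overline{\theta}_1(G_{n,p}) = O_{\eps}(n^2 \log n / \alpha^2(G_{n,p}))$; the matching lower bound is already provided by Guo, Patton and Warnke~\cite{Guo2023-wq} as recalled in the introduction. To do so, I build a cover of the non-edges of $G = G_{n,p}$ using many independent runs of the following random greedy independent set process. Set $V_0 = V(G)$ and, at each step $t \ge 0$, pick $v_t \in V_t$ uniformly at random, add $v_t$ to an initially empty set $I$, and put $V_{t+1} = V_t \setminus N[v_t]$. I run the process up to a deterministic time $T^{\star} = (1-o(1))\, p^{-1}\log(pn)$, so that $I$ is an independent set of size $T^{\star} = \Omega(\alpha(G))$ with high probability.

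The central technical claim driving the argument is that for some $c = c(\eps) > 0$, uniformly over all non-edges $\{u,v\}$ of $G$,
\begin{equation}
	\Pr{\{u,v\} \subseteq I} \ge c \cdot \dfrac{\alpha(G)^2}{n^2}.
	\label{eq:plan-cov}
\end{equation}
Once~\eqref{eq:plan-cov} is established, the theorem follows by a short coupon-collector argument. Perform $T := \lceil C_{\eps} (\log n) \cdot n^2 / \alpha(G)^2 \rceil$ independent runs of the process, producing independent sets $I_1, \dots, I_T$ of $G$. For a fixed non-edge $e$, independence gives
\begin{align*}
	\Pr{e \not\subseteq I_j \text{ for every } j \in [T]}
	\le \left (1 - \dfrac{c\, \alpha(G)^2}{n^2} \right )^T
	\le n^{-cC_{\eps}},
\end{align*}
so choosing $C_{\eps}$ large and taking a union bound over the at most $\binom{n}{2}$ non-edges yields that with high probability $\{I_1, \dots, I_T\}$ covers every non-edge, hence $\overline{\theta}_1(G) \le T$, as desired.

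To prove~\eqref{eq:plan-cov}, I would apply the differential equation method in the spirit of Bennett and Bohman~\cite{Bennett2016-di}. I work on the high-probability event that $G$ enjoys the standard pseudorandomness of $G_{n,p}$: all degrees are $(1+o(1))pn$, all codegrees are $(1+o(1))p^2 n$, and analogous bounds hold on small joint neighbourhoods. The bulk variable evolves as
\begin{align*}
	\Ex{|V_{t+1}| - |V_t| \mid V_t} = -1 - \dfrac{2\, e(G[V_t])}{|V_t|} \approx -1 - p|V_t|,
\end{align*}
and Freedman's inequality upgrades this to the trajectory $|V_t| = (1+o(1))\, n\, e^{-pt}$ throughout $t \le T^{\star}$. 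In parallel, for each non-edge $\{u,v\}$ I track the indicator $\mathbf{1}[\{u,v\} \subseteq V_t]$ jointly with $|V_t \cap N(u)|$, $|V_t \cap N(v)|$ and $|V_t \cap N(u) \cap N(v)|$, all of which concentrate around their deterministic trajectories. Conditional on $\{u,v\} \subseteq V_t$, each of $u$ and $v$ is selected at step $t$ with probability $1/|V_t|$; integrating the selection probabilities over $t \le T^{\star}$ and combining the two coordinates yields $\Pr{\{u,v\} \subseteq I} = \Omega((T^{\star}/n)^2) = \Omega(\alpha(G)^2/n^2)$, uniformly in $\{u,v\}$.

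The hard part will be pushing the differential equation analysis through in the sparse regime $p \ge (\log n)^{2+\eps}/n$, where the typical degree $pn$ is only polylogarithmic. Unlike the hypergraph setting of Bennett and Bohman, where degrees grow polynomially in $n$ and generous tolerances are available, here each of the trajectories above must be controlled to additive error $o(1/p)$ over a run of length $p^{-1}\log(pn)$, and the control must hold simultaneously across the family of all $\Theta(n^2)$ target pairs $\{u,v\}$. The lower bound $p \ge (\log n)^{2+\eps}/n$ is precisely what affords Freedman's inequality the margin needed to maintain the tracking up to $T^{\star}$; this is why $n^{-1}\log n$ is identified in the introduction as the natural barrier for the method.
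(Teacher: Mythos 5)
Your proposal follows essentially the same route as the paper: reduce to the upper bound (quoting Guo--Patton--Warnke for the lower bound), run the random greedy independent set process, establish the uniform pair-coverage estimate $\Pr{\{u,v\}\subseteq I}=\Omega_{\eps}(\alpha(G)^2/n^2)$ via the differential equation method with Freedman's inequality on a pseudorandomness event for $G_{n,p}$, and conclude with $\Theta_{\eps}(n^2\log n/\alpha^2(G))$ independent runs plus a union bound over non-edges --- this is precisely the paper's Theorem~\ref{thm:prob-non-edge} followed by its short deduction of Theorem~\ref{thm:main}. The one parameter you should adjust is the stopping time: the paper stops at $k=\eps 2^{-10}p^{-1}\log pn$ rather than at $(1-o(1))p^{-1}\log(pn)$, since its multiplicative error terms grow like $f_i\approx e^{2^5 pi}f_0$ and would no longer be $o(1)$ at your $T^{\star}$; as only $T^{\star}=\Theta(\alpha(G))$ is needed, this concession costs nothing.
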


It is worth noting we were only able to overcome the previous obstacles because for many host graphs the independent set generated by our random greedy process is not uniformly distributed.
As an example, let $1 \ll k \ll n$ and take a complete bipartite graph $K[A,B]$ with $|A| = n/3$ and $|B| = 2n/3$.
Now let $I$ be a uniformly-chosen random independent $k$-set and let $J$ be the independent set generated by our random greedy process.
One can check that if $u, v \in A$, then $\Pr{u,v \in I} \ll \Pr{u,v \in J}$.

Now let us mention some of the historic background related to $\overline{\theta}_1$.
In the 1960s, Erd\H{o}s, Goodman and Pósa~\cite{Erdos1966-so} introduced the parameter $\theta_1(G)$, defined as the minimum number of cliques required to cover all the edges of $G$.
Notice that $\overline{\theta}_1(G) = \theta_1(\overline{G})$, and hence one may think of $\theta_1(G)$ and $\overline{\theta}_1(G)$ as the dual versions of each other.
In~\cite{Erdos1966-so}, the authors showed that $\theta_1(G) \le \lfloor n^2/4 \rfloor$ for every $n$-vertex graph $G$, which is attained by balanced complete bipartite graphs.
In 1994, Alon~\cite{Alon1986-oc} showed that if $G$ is a $n$-vertex graph with maximum degree $\Delta$, then $\theta_1(\overline{G}) = O(\Delta^2 \log n)$.
Eaton and Rödl~\cite{Eaton1996-ap} showed that this bound is close to best possible, as there are $n$-vertex graphs $G$ with $\theta_1(\overline{G}) = \Omega(\Delta^2 \log n / \log \Delta)$.

Frieze and Reed~\cite{Frieze1995-xa} showed that with high probability $\theta_1(G_{n,p}) = \Theta_p(n^2/\log n)$ for constant values of $p$.
Guo, Patton and Warnke~\cite{Guo2023-wq} extended these results much further by showing that for every fixed $\gamma \in (0,1)$ and $p$ such that $n^{-2} \le p \le 1-\gamma$, with high probability, we have $\theta_1(G_{n,p}) = \Theta (n^2p /\log_{1/p}^{2} n )$.
As results in~\cite{Guo2023-wq} left the range where $p$ tends to 1 open, it is more convenient to just consider $\overline{\theta}_1(G_{n,p})$ for $p$ tending to 0.

The rest of the paper is divided as follows.
In Section \ref{sec:proofs}, we present the proofs of our main theorems; in Section~\ref{sec:random-greedy}, we analyse our random greedy independent set process using the differential equation method; in Section~\ref{sec:prob-cover}, we prove the main technical result of the paper, namely Theorem~\ref{thm:main-dem}; and in Section \ref{sec:preliminaries}, we present some tools and standard lemmas that will be used in the analysis of our random process.

\section{Proofs of the main theorems}\label{sec:proofs}

We consider the following random process, which we call the \emph{random greedy independent set process}.
Given a graph $G$, we select vertices at random from its vertex set as follows.
We choose $v_1$ uniformly at random in $V(G)$.
For $i \ge 2$, we choose $v_i$ uniformly at random from the common non-neighbourhood of $v_1,\ldots,v_{i-1}$ in $G$, which is defined as
\begin{align}\label{eq:non-neighbour}
	N^c_G(\{v_1,\ldots,v_{i-1}\}) \coloneqq V(G) \setminus \bigcup_{j=1}^{i-1}  \big (\{v_j\} \cup N_G(v_j)\big ) .
\end{align}
If such a vertex does not exist, we stop the process.
For each step $i$ before stopping, we denote by $I_i(G)$ the random independent set $\{v_1,\ldots,v_i\}$ built by this process.

Here we are mainly interested in analysing this random process for \emph{$(\eps,p)$-typical} graphs.
To this end, for $p \in (0,1)$ and $n,i$ positive integers, we set
\begin{align*}
	f_0(n,p) \coloneqq 4\log n \sqrt{ \dfrac{\log pn}{pn} + p } \qquad \text{and} \qquad f_i(n,p) :=  \left ( \dfrac{1+16p}{1-p} \right )^i f_0(n,p).
\end{align*}
We write $f_i(n,p) = f_i$ whenever the parameters $n$ and $p$ are clear from the context.
We choose the growth rate of $f_i$ so that, up to an appropriate stopping time, the process $\big(d_i(v) - (1+f_i)(1-p)^i p n\big)_i$ is a supermartingale for all $v$; see Lemma~\ref{lem:supermartingale} and~\eqref{eq:increment-fidi}.
The initial value $f_0$ is determined by an application of Freedman's inequality in this analysis; see Theorem~\ref{thm:main-dem} and~\eqref{eq:freedman-appl-1}.

We say that a graph $G$ on $n$ vertices is $(\eps,p)$-typical if it satisfies the following three properties:
\begin{enumerate}[label=(P\arabic*)]
	\item \label{item:P1} For every set $S \se V(G)$ of size at most $\eps p^{-1} \log pn$, we have\footnote{We use the notation $x = a \pm b$ for $x \in [a-b,a+b]$.}
	\begin{align*}
		|N^c_G(S)| = \left ( 1 \pm f_{|S|} \right ) (1-p)^{|S|} n .
	\end{align*}
	
	\item \label{item:P2} For every vertex $v \in V(G)$, we have 
	\begin{align*}
		d_G(v) = \left (1 \pm \dfrac{f_0}{2} \right ) pn .
	\end{align*}
	\item \label{item:P3} For all pairs of distinct vertices $u,v \in V(G)$, we have
	\begin{align*}
		|N_G(u) \cap N_G(v)| \le \Delta_2 \coloneqq 4p^2n + 2^7 \log n.
	\end{align*}
\end{enumerate}
We omit $G$ from the notation when it is clear from the context.

Our next lemma states that, for the range of $p$ we are interested in, $G_{n,p}$ is $(\eps,p)$-typical with high probability.
\begin{lemma}\label{lem:typical}
	Let $\eps \in (0,2^{-10})$ and $(\log n)^{2+\eps} n^{-1} \le p < n^{-\eps}$. Then, $G_{n,p}$ is $(\eps,p)$-typical with probability at least $1-O(n^{-3})$.
\end{lemma}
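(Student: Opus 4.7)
The plan is to establish (P1)--(P3) independently using standard Chernoff-type tail bounds combined with a union bound over the relevant family of vertices, pairs, or subsets. In each case the quantity under control is literally binomial, so the task reduces to verifying that the tail exponent dominates the logarithmic cost of the union bound with enough margin to produce an overall failure probability of $O(n^{-3})$.

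Properties (P2) and (P3) are routine. For (P2), $d(v)\sim \text{Bin}(n-1,p)$ has mean close to $pn$, and two-sided Chernoff gives $\Pr{|d(v)-(n-1)p|\geq (f_0/3)pn}\leq 2\exp(-\Omega(f_0^2 pn))$; since $f_0^2 pn \geq 16\log^2 n(\log pn+p^2 n) \geq 16\log^2 n$ throughout the range, a union bound over the $n$ vertices suffices. For (P3), $|N(u)\cap N(v)|\sim \text{Bin}(n-2,p^2)$ has mean $\mu\leq p^2 n$, and the large-deviation bound $\Pr{X\geq a}\leq (e\mu/a)^a$ applied with $a=\Delta_2 = 4p^2 n+2^7\log n \geq 4\mu$ yields $(e/4)^{128\log n}=n^{-\Omega(1)}$, comfortably absorbing the $\binom{n}{2}$-union bound.

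The substantive step is (P1). For a fixed set $S$ of size $s$, the event ``$v\in N^c(S)$'' depends only on the potential edges between $v$ and $S$, and these events are therefore independent across $v\notin S$, so $|N^c(S)|\sim \text{Bin}(n-s,(1-p)^s)$ with mean $\mu = (n-s)(1-p)^s$. Applying Chernoff in the appropriate form (using that the lower tail is vacuous when $f_s>1$) yields
\[
\Pr{\bigl|\,|N^c(S)|-\mu\,\bigr|\geq f_s\mu}\leq 2\exp\bigl(-c\min(f_s^2,f_s)\,\mu\bigr)
\]
for an absolute constant $c>0$. A direct computation using $f_s=f_0((1+16p)/(1-p))^s$ gives $f_s^2\mu \geq f_0^2 n/2$ and $f_s\mu \geq f_0 n/2$; from the defining formula for $f_0$ one extracts the two lower bounds $f_0^2 n \geq 16\log^2 n \cdot \log pn/p$ and $f_0 n \geq 4n\log n\sqrt{p}$. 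Checking these against the union-bound weight $s\log n \leq \eps\log pn\log n/p$ at the two endpoints $p=\Theta((\log n)^{2+\eps}/n)$ and $p=n^{-\eps}$ shows both $f_0^2 n$ and $f_0 n$ beat $s\log n$ by a factor tending to $\infty$ on the whole range. A union bound over the $\binom{n}{s}\leq n^s$ subsets then gives a failure probability $O(n^{-3s})$ per $s$, and summing over $1\leq s\leq k_0$ contributes $O(n^{-3})$.

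The principal obstacle is the tightness of this (P1) calibration. The function $f_s$ grows geometrically with rate $(1+16p)/(1-p)$ because the supermartingale argument in Section~\ref{sec:random-greedy} later demands this rate, and this growth forces the tail bound to become delicate precisely as $s\to k_0$. The argument survives only because the two terms in $f_0 = 4\log n\sqrt{\log pn/(pn)+p}$ are separately calibrated to handle the two extremes of $p$: the $\log pn/(pn)$ term supplies the required lower bound on $f_0^2 n$ in the low-$p$ regime, while the $p$ term supplies the required lower bound on $f_0 n$ in the high-$p$ regime, each producing exactly the margin needed to absorb a union-bound weight of order $k_0\log n$.
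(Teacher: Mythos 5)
Your proposal is correct and follows essentially the same route as the paper: Chernoff plus a union bound for each of \ref{item:P1}--\ref{item:P3}, with the decisive check for \ref{item:P1} being that $f_s^2\mu_s$ dominates the union-bound cost $s\log n$, which both arguments reduce to the combination of $f_0^2\ge \log n\log (pn)/(pn)$ and $f_s^2(1-p)^s\ge f_0^2$. One small inaccuracy: your auxiliary claim that $f_0 n\ge 4n\log n\sqrt{p}$ beats $s\log n$ on the whole range fails near $p=(\log n)^{2+\eps}n^{-1}$, but this is harmless because $f_s=o(1)$ throughout (so $\min(f_s^2,f_s)=f_s^2$ and only the first bound is ever needed), and in any case $f_0 n\gg s\log n$ does hold via the $\log (pn)/(pn)$ term under the square root.
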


The proof of Lemma~\ref{lem:typical} is somewhat standard, and hence we postpone it to the end of the paper (Section~\ref{sec:preliminaries}).
Our next theorem gives the asymptotic value for the probability that a given vertex and a given non-edge of an $(\eps,p)$-typical graph $G$ is covered by $I_k(G)$, that is, the independent set generated by our random greedy process, for $p$ in the range of interest and $k = \eps 2^{-10}p^{-1} \log pn$ (throughout the article, we omit roundings if they do not affect the argument).
This is a key ingredient in the proof of both Theorems~\ref{thm:main-prague} and~\ref{thm:main}.

\begin{theorem}\label{thm:prob-non-edge}
	Let $\eps \in (0,2^{-10})$ and $(\log n)^{2+\eps} n^{-1} \le p < n^{-\eps}$. 
	Let $G$ be an $n$-vertex $(\eps,p)$-typical graph and $k = \eps 2^{-10} p^{-1} \log pn $.
	Then, for all distinct $u, v \in V(G)$ such that $uv \notin E(G)$, we have
	\begin{align*}
		\Pr{v \in I_k} = \big(1+o_{\eps}(1) \big ) \dfrac{k}{n}  \qquad 
		\text{and} \qquad \Pr{u,v \in I_k} = \big  (1+o_{\eps}(1) \big )  \left ( \dfrac{k}{n} \right )^2.
	\end{align*}
\end{theorem}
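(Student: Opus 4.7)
Since $k\le\eps p^{-1}\log pn$, property~(P1) guarantees the process runs for all $k$ steps with $|N^c(S_i)|=(1\pm f_i)(1-p)^i n$ deterministically, so
\[
\Pr{v\in I_k}=\sum_{i=1}^k\Ex{\frac{\mathbf{1}[v\in N^c(S_{i-1})]}{|N^c(S_{i-1})|}}=\sum_{i=1}^k\frac{1+O(f_{i-1})}{(1-p)^{i-1}n}\,\Pr{v\in N^c(S_{i-1})}.
\]
The first identity therefore reduces to showing $\Pr{v\in N^c(S_{i-1})}=(1-p)^{i-1}(1+o_\eps(1))$ uniformly in $i\le k$, and then summing over~$i$.

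\textbf{Single-vertex survival.} Conditional on $\F_{j-1}$ with $v\in N^c(S_{j-1})$, the probability that step~$j$ evicts $v$ equals $(1+|N(v)\cap N^c(S_{j-1})|)/|N^c(S_{j-1})|$. Combining (P1) for the denominator with the DEM concentration $d_{j-1}(v)=(1\pm f_{j-1})(1-p)^{j-1}pn$ supplied by the supermartingale analysis of Section~\ref{sec:random-greedy}, this equals $p+O(pf_{j-1})+O((1-p)^{-j}n^{-1})$, so the one-step survival factor is $(1-p)(1+O(pf_{j-1}+(1-p)^{-j}n^{-1}))$. Iterating across $j<i$ produces a total log-error of $O(p\sum_{j<k}f_j+(pn)^{\eps/2^{10}-1})$. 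Now $f_j=f_0((1+16p)/(1-p))^j$ is a geometric sequence of ratio $1+\Theta(p)$, which gives $p\sum_j f_j=O(f_k)$; a short calculation using the definition of $f_0$ and the bounds on $p$ confirms $f_k=o_\eps(1)$ uniformly, and the residual $(pn)^{\eps/2^{10}-1}$ is also $o_\eps(1)$. Hence $\Pr{v\in N^c(S_{i-1})}=(1-p)^{i-1}(1+o_\eps(1))$, and substituting yields $\Pr{v\in I_k}=(1+o_\eps(1))k/n$.

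\textbf{Pair survival.} By symmetry, $\Pr{u,v\in I_k}=2\sum_{1\le i<j\le k}\Pr{v_i=u,v_j=v}$, and we split each term into three stages: (A)~$S_{i-1}\subseteq V\setminus(\{u,v\}\cup N(u)\cup N(v))$, (B)~$v_i=u$, and (C)~$v_{i+1},\dots,v_{j-1}\notin\{v\}\cup N(v)$. Stage~(B) contributes $(1+O(f_{i-1}))/((1-p)^{i-1}n)$ by (P1); stage~(C) contributes $(1-p)^{j-1-i}(1+o_\eps(1))$ by the single-vertex argument applied to~$v$ from step~$i+1$. For stage~(A), the per-step death probability is $2p-p^2+O(pf_{\ell-1})+O(\Delta_2/((1-p)^{\ell-1}n))$, where the last term bounds the overlap $|N(u)\cap N(v)\cap N^c(S_{\ell-1})|\le\Delta_2$ using~(P3). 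Iterating, and checking that each of $kp^2$, $p\sum f_j$, and $\Delta_2(pn)^{\eps/2^{10}-1}$ is $o_\eps(1)$, produces $\Pr{A}=(1-p)^{2(i-1)}(1+o_\eps(1))$. Multiplying the three factors gives $\Pr{v_i=u,v_j=v}=(1+o_\eps(1))/((1-p)n^2)$; summing over $\binom{k}{2}$ ordered pairs and doubling yields $\Pr{u,v\in I_k}=(k/n)^2(1+o_\eps(1))$, using $k\to\infty$ and $p=o(1)$.

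\textbf{Main obstacle.} The principal difficulty is the accumulated multiplicative error over $k$ steps: a naive per-step error of $f_j$ would compound to $kf_k$, which is not $o(1)$ in the sparse range. The decisive gain is that every per-step error carries an additional factor of $p$, and then the geometric-series identity $p\sum_j f_j=O(f_k)$ pulls the total error back to the concentration scale $f_k$. Verifying $f_k=o_\eps(1)$ uniformly over $p\in[(\log n)^{2+\eps}n^{-1},n^{-\eps})$ is a careful but elementary calculation that splits the definition of $f_0$ into its $\sqrt{p}$ and $\sqrt{\log(pn)/(pn)}$ components and uses $pk=\eps 2^{-10}\log pn$. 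Finally, the rare event on which the DEM bounds fail contributes at most $O(n^{-3})=o(k/n)$ and is absorbed into the error term.
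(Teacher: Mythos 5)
Your proposal is correct and follows essentially the same route as the paper: decompose $\Pr{v_i=u,\,v_j=v}$ into per-step conditional survival and selection probabilities, feed in the degree concentration from Theorem~\ref{thm:main-dem} together with~\ref{item:P1} and~\ref{item:P3}, and exploit that each per-step multiplicative error carries a factor of $p$, so the accumulated error is $O(p\sum_j f_j)=O(f_k)=o_\eps(1)$ (the paper bounds this slightly more crudely as $O(f_kpk)$ and then verifies that this is $o(1)$ by a case analysis). The only slips are cosmetic: your ``three factors'' for the pair case omit the selection probability of $v$ at step $j$ (though your final arithmetic accounts for it), and the paper is more careful when removing the conditioning on $\{\tau>t\}$ --- the DEM failure probability gets divided by $\Pr{\E_{t-1}^{i,j}(u,v)}$, which requires inductive lower bounds of the form $\Pr{\E_{t-1}^{i,j}(u,v)}\ge(1-3p)^{t-2}/(2n)$.
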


In Theorem~\ref{thm:prob-non-edge}, showing that $\Pr{v \in I_k} = \Theta_{\eps} (k/n)$ is essentially equivalent to proving that for every $i \in [k]\coloneqq \{1,\ldots,k\}$ we have $\Pr{v_i = v} = \Theta_{\eps} (1/n)$ (similarly for the probability that $u,v \in I_k$).
For the event $\{ v_i = v \}$ to occur, it is necessary that none of the vertices $v_1,\ldots,v_{i-1}$ is adjacent to $v$.
By invoking property (P1) of $(\eps,p)$-typical graphs, one can show that for each $j$, the probability that $v_j$ is not a neighbour of $v$ given that $\{ v_1,\ldots,v_{j-1} \} \se N^c(v)$, is of order $(1\pm 2f_j)(1-p)$.
Unfortunately, this rough estimate is insufficient to deduce that  $\Pr{v_i = v} = \Theta_{\eps} (1/n)$ for all $i \in [k]$, since the error terms accumulate and the sum $f_1 + \ldots + f_k = e^{\Theta(pk)}p^{-1}f_0$ does not tend to zero as $n$ grows.

In order to overcome this difficulty, we use the differential equation method to have a refined control on the degrees of all vertices in the subgraph $G_j$ induced by $V_j = N^{c}(v_1,\ldots,v_j)$ for every $j \in [k]$. 
Roughly speaking, in Theorem~\ref{thm:main-dem} we show that with high probability for every $j \in [k]$, every vertex in $V_j$ has degree $(1 \pm f_j)p|V_j|$ in $G_j$.
In particular, one can show that for each $j$, the probability that $v_j$ is not a neighbour of $v$ given that $\{ v_1,\ldots,v_{j-1} \} \se N^c(v)$, is of order $1-(1\pm 2f_j)p$.
As $f_1 + \ldots + f_k = o(p^{-1})$, this new estimate plus some careful analysis allows us to derive Theorem~\ref{thm:prob-non-edge}.
See Section~\ref{sec:prob-cover} for more details.

Before proceeding with the proof of Theorem~\ref{thm:main-prague}, we review the necessary background on the Prague dimension, which is closely related to the parameter $\text{idim}$.
For a graph $G$, $\text{idim}(G)$ is the smallest integer $t$ such that there are collections of vertex-disjoint subsets $\mathcal{I}_j = (I_{j,1},\ldots,I_{j,r_j})$ of $V(G)$ for $j \in [t]$ so that 
\begin{align*}
	E(G) = \binom{V(G)}{2} \setminus \bigcup \limits_{j \in [t]} \bigcup \limits_{i \in [r_j]} \binom{I_{j,i}}{2}.
\end{align*}
Here, we denote by $\binom{A}{2}$ the set of all pairs of elements of a set $A$.


\begin{proof}[Proof of Theorem~\ref{thm:main-prague}, assuming Lemma~\ref{lem:typical} and Theorem~\ref{thm:prob-non-edge}]
	Let $G$ be an $(\eps,p)$-typical graph on $n$ vertices.
	We set
	\[ k \coloneqq \eps 2^{-10} p^{-1} \log pn, \qquad s \coloneqq  n k^{-1}  \qquad \text{and} \qquad t \coloneqq C_{\eps} n k^{-1}\log n,\]
	where $C_{\eps} > 0$ is some large constant to be chosen later.
	We aim to use~\eqref{eq:pdim-idim} and, in order to do so, we use many independent instances of the random greedy independent set process described above. As every non-edge is essentially equally likely to be covered, a $\log n$ factor overhead is sufficient to cover all non-edges with high probability.

	Let $\big(J_{i,j}: i \in [t], j \in [s] \big )$ be a collection of $ts$ independent copies of $I_k = I_k(G)$.
	For each $i \in [t]$, define $\mathcal{I}_i$ to be the collection of vertex-disjoint subsets of $V(G)$ given by  
	\begin{align*}
		I_{i,j} \coloneqq J_{i,j} \setminus \big ( J_{i,1} \cup \cdots \cup J_{i,j-1} \big )
	\end{align*}
	for all $j \in [s]$. 
	By~\eqref{eq:pdim-idim}, it suffices to show that with high probability every non-edge of $G$ is covered by some $I_{i,j}$.
	For $i \in [t]$, the probability that a given non-edge $uv$ is covered by an independent set in the partition $\mathcal{I}_i$ is equal to
	\begin{align}\label{eq:prob-non-edge}
		\sum_{j=1}^{s}\Pr{u,v \in I_{i,j}} & = \sum_{j=1}^{s} \Pr{u,v \in J_{i,j}} \prod_{\ell\le j-1} \Pr{ \{ u,v \} \cap J_{i,\ell} = \emptyset}\nonumber\\
		& = \sum_{j=1}^{s} \Pr{u,v \in I_k} \Pr{ \{ u,v \} \cap I_k = \emptyset}^{j-1}.
	\end{align}
	By the inclusion-exclusion principle, the probability that neither $u$ nor $v$ is covered by $I_k$ is 
	\begin{align}\label{eq:prob-non-edge-2}
		\Pr{ \{ u,v \} \cap I_k = \emptyset} &= 1 - \Pr{u \in I_k} - \Pr{v \in I_k} + \Pr{u,v \in I_k} \nonumber \\
		& = 1 - \Theta_{\eps} \left ( \dfrac{k}{n} \right ) = 1 - \Theta_{\eps}(s^{-1}).
	\end{align}
	Here we use Theorem~\ref{thm:prob-non-edge} and the bound $k = o(n)$, which follows from the fact that $p \gg n^{-1}\log n$.

	By combining~\eqref{eq:prob-non-edge},~\eqref{eq:prob-non-edge-2} and Theorem~\ref{thm:prob-non-edge}, we obtain that the probability that a given non-edge $uv$ is covered by some $I_{i,j}$ in $\mathcal{I}_i$ is of order
	\begin{align}\label{eq:prob-non-edge-3}
		\Theta_{\eps} \left ( \dfrac{k^2}{n^2} \right )\sum_{j=1}^{s} \left ( 1 - \Theta_{\eps} \left ( s^{-1} \right )\right )^{j-1} = \Theta_{\eps} \left ( \dfrac{k}{n} \right ) \left ( 1 - \left ( 1 - \Theta_{\eps} \left ( s^{-1} \right ) \right )^s \right ) = \Theta_{\eps} \left ( \dfrac{k}{n} \right ).
	\end{align}
	In the first inequality, we use that $\sum_{j=0}^{s-1} (1-q)^j = q^{-1}(1-(1-q)^s)$ for all $q \in (0,1)$;
	in the last equality, we use that $s =  n/k$, and hence $(1 - (1 - \Theta_{\eps}(s^{-1}))^s) = \Theta_{\eps}(1)$.

	It follows from~\eqref{eq:prob-non-edge-3} that 
	\begin{align}\label{eq:prob-non-edge-4}
		\Pr{ \{u,v\} \not \se I_{i,j} \text{ for all } j \in [t]} = \left ( 1 - \Theta_{\eps}\left ( \dfrac{k}{n} \right ) \right )^t \le n^{-3}.
	\end{align}
	In the last inequality, we choose $C_{\eps}$ large enough so that the corresponding probability is at most $n^{-3}$.

	By the union bound, we conclude that for any $(\eps,p)$-typical graph $G$, the probability that some non-edge of $G$ is not covered by any $I_{i,j}$ with $i \in [t]$ and $j \in [s]$ is at most $n^{-1}$.
	As $G_{n,p}$ is $(\eps,p)$-typical with probability at least $1-O(n^{-3})$ by Lemma~\ref{lem:typical}, this concludes the proof.
\end{proof}

Now we are ready to prove Theorem \ref{thm:main}.

\begin{proof}[Proof of Theorem~\ref{thm:main}, assuming Lemma~\ref{lem:typical} and Theorem~\ref{thm:prob-non-edge}]
	Since $\alpha(G_{n,p})$ is of order $p^{-1}\log pn$ (see~\cite{Bollobas1976-al,Frieze1990-cu,Grimmett1975-vs,Matula1970-ka}), it suffices to show that with high probability we can find a collection of $O_{\eps}(n^2p^2\log n\log^{-2} pn)$ independent sets covering all non-edges of $G_{n,p}$.

	Let $G$ be an $(\eps,p)$-typical graph on $n$ vertices and set
	$k =  \eps 2^{-10} p^{-1} \log pn $.
	By Theorem~\ref{thm:prob-non-edge}, 
	for all $uv \notin E(G_{n,p})$, we have $\Pr{u,v \in I_k} \ge k^2n^{-2}/2$,
	where $I_k=I_k(G)$ is the random independent as defined at the beginning of Section~\ref{sec:proofs}.

	Set $t = 6k^{-2} n^2\log n $ and let $I^{(1)}_k,\ldots, I^{(t)}_k$ be independent copies of $I_k$.
	We have
	\begin{align*}
		\Pr{\{u,v\} \not \se I_k^{(j)} \text{ for all } j \in [t]} = \prod_{j=1}^t \Pr{\{u,v\} \not \se I_k^{(j)}} \le \left ( 1 - \dfrac{k^2}{2n^2} \right )^t \le e^{-3\log n}
	\end{align*}
	for every non-edge $uv$ of an $(\eps,p)$-typical graph $G$.
	As $G_{n,p}$ is $(\eps,p)$-typical with probability at least $1-O(n^{-3})$ by Lemma~\ref{lem:typical}, this together with the union bound over all non-edges of $G_{n,p}$ 
	shows that with high probability at most $t$ independent sets are needed to cover all non-edges of $G_{n,p}$.
\end{proof}

\section{The random greedy independent set process}~\label{sec:random-greedy}

In this section, we analyse the random greedy independent set process described in Section~\ref{sec:proofs} and show that, with high probability, the degrees of the vertices which are active at step $i$ are close to their expected trajectories.

Let $\eps \in (0,2^{-10})$ be a fixed constant, $n$ be sufficiently large, $(\log n)^{2+\eps} n^{-1} \le p < n^{-\eps}$, $G$ an $(\eps,p)$-typical graph on $n$ vertices and set
\begin{align*}
	k=  \dfrac{\eps\log pn}{2^{10}p}.
\end{align*}
To simplify notation in our analysis, we set $G_0 = G$, $I_0 = \emptyset$ and $V_0 = V(G_0)$.
For $i \ge 1$, we set
\[ V_i := N^c(\{v_1,\ldots,v_{i}\}), \qquad I_i := \{v_1,\ldots,v_{i}\} \qquad \text{and} \qquad G_i = G[V_i].\]
Note that $I_i$ is the independent set built until step $i$, $V_i$ is the set of available (or active) vertices at step $i$ and $G_i$ is the subgraph of $G_0$ induced by $V_i$.
In addition, $V_i\cap I_i = \emptyset$ for all $i$.

For $i \ge 0$ and $u,v \in V_i$, we denote by $N_i(v)$ the neighbourhood of $v$ in $G_i$ and we set
\[d_i(v) = |N_i(v)| \qquad \text{and} \qquad d_i(u,v) = \big | N_i(u) \cap N_i(v) \big |.\]
We would like to track the evolution of $d_i(v)$ for every vertex in $V_0$ and show that it is well-behaved.
For this to be possible, the first step is to define an expected trajectory. 
For $i \ge 0$, we set 
\begin{align}\label{eq:expected-trajectory-degree}
	\td_i := (1-p)^ipn.
\end{align}

Let us also recall the definition of the error terms that we use to control the deviation of the actual degrees from the expected trajectory. 
For $i \ge 0$, we set
\begin{align}\label{eq:error-function}
	f_0 \coloneqq 4\log n \sqrt{ \dfrac{\log pn}{pn} + p } \qquad \text{and} \qquad f_i :=  \left ( \dfrac{1+16p}{1-p} \right )^i f_0.
\end{align}
Note that the error is increasing, yet we have 
\begin{align}\label{eq:bound-f}
	f_{i-1} < f_{i} \le f_k \le (pn)^{2^{-5}\eps} f_0 = o(1) \qquad \text{and} \qquad
	\dfrac{1}{1\pm f_i} = 1 \pm \dfrac{3}{2}f_i
\end{align}
for all $i \in [k]$ and  $(\log n)^{2+\eps} n^{-1} \le p < n^{-\eps}$.
To derive the inequality $f_k \le (pn)^{2^{-5}\eps}f_0$, we simply use that $(1+16p)(1-p)^{-1} \le 1+2^5p \le e^{2^5p}$.
It follows from~\ref{item:P1} that
\begin{align}\label{eq:initial-parameters-2}
	|V_i| = (1\pm f_i)(1-p)^i n .
\end{align}
for all $i \in [k]$. 
Other useful estimates are
\begin{align}\label{eq:asymptotics}
		(1-3p)^k \ge e^{-8pk} \ge (np)^{-2^{-7}\eps} \ge n^{-2^{-7}(\eps - \eps^2)} \ge n^{-2^{-7}\eps } \quad
		\text{and} \quad f_i > f_0 \ge n^{-1/2}
	\end{align}
for all $i \in [k]$. 
Above we use that $1-x \ge e^{-2x}$ for all $x \in (0,1/2)$.

We also recall that $\Delta_2 = 4 p^2n \, + \, 2^7 \log n$ is the error function used to control the size of the common neighbourhood of any two distinct vertices in $V_0$, see~\ref{item:P3}.
By our choice of $p$ and $f_i$, note that
	\begin{align*}
		\Delta_2 = O(1)\max\{p^2n,\log n\} \ll 4pn\log n \sqrt{ \dfrac{\log pn}{pn} + p } = f_0\td_0.
	\end{align*}
To see this, note that $p < p^{1/2}$ and $(pn)^{-1} \ll (pn)^{-1/2}$.
Thus,
\begin{align}\label{eq:bound-Delta-2}
	1 \le \Delta_2 \ll f_0 \td_0 < f_i \td_i
\end{align}
for all $i \ge 1$. The estimate
\begin{align}\label{eq:for-closed-neighbourhood}
	(1\pm f_i)\td_i \pm 1 = \left ( 1 \pm \dfrac{4}{3} f_i \right ) \td_i
\end{align}
for all $i \ge 0$ shall also be useful in our analysis.

Our ultimate goal in this section is to show the following theorem.

\begin{theorem}\label{thm:main-dem}
	Let $\eps \in (0,2^{-10})$ and $(\log n)^{2+\eps} n^{-1} \le p < n^{-\eps}$. 
	Let $G$ be an $n$-vertex $(\eps,p)$-typical graph and $k = \eps 2^{-10}p^{-1} \log pn$.
	Then, the following holds for all sufficiently large $n$. With probability at least $1- n^{-2^{-11}\log pn}$, in the random greedy independent set process, we have
	\[ d_i(v) = (1 \pm f_i)\td_i\] 
	for all $v \in V_i$ and all $i \in [k-1]$.
\end{theorem}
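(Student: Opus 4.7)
The plan is to apply the differential equation method: for each vertex $v$ I will compare $d_i(v)$ to the target trajectory $\td_i$ via stopped processes pinched between $(1\pm f_i)\td_i$, verify the sub-/supermartingale inequalities, and then concentrate via Freedman's inequality followed by a union bound over vertices and signs.

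\emph{Setup.} Let $\tau$ be the smallest $i\in[k-1]$ at which some $v\in V_i$ violates $d_i(v)\in(1\pm f_i)\td_i$, and $\tau=k$ otherwise; the theorem reduces to bounding $\Pr{\tau<k}$. For each $v$, set $\sigma_v=\min\{i:v\notin V_i\}$ and $\rho_v=\sigma_v\wedge\tau$, and track the stopped processes
\[A_i^{\pm}(v)=d_{i\wedge\rho_v}(v)-(1\pm f_{i\wedge\rho_v})\td_{i\wedge\rho_v},\]
with $d$ frozen after $\sigma_v$. Property~\ref{item:P2} gives the initial budgets $A_0^+(v)\le-f_0\td_0/2$ and $A_0^-(v)\ge f_0\td_0/2$.

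\emph{Drift computation.} The key step is showing $A_i^+$ is a supermartingale and $A_i^-$ a submartingale. Fix $i<\rho_v$ and condition on the natural filtration $\mathcal{F}_i$. Since $v_{i+1}$ is uniform in $V_i$ and $d_{i+1}(v)=d_i(v)-|N_i(v)\cap N_i(v_{i+1})|$ on $\{v\in V_{i+1}\}$, a short calculation yields
\[\Ex{d_{i+1}(v)-d_i(v)\mid\mathcal{F}_i}=-\frac{1}{|V_i|}\sum_{w\in V_i\setminus(\{v\}\cup N_i(v))}|N_i(v)\cap N_i(w)|.\]
Interchanging summation rewrites the inner sum as $\sum_{u\in N_i(v)}d_i(u)-d_i(v)-\sum_{w\in N_i(v)}|N_i(v)\cap N_i(w)|$. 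Using the inductive bound $d_i(u)=(1\pm f_i)\td_i$ (valid since $i<\tau$), property~\ref{item:P3} to cap each codegree by $\Delta_2$, inequality~\eqref{eq:bound-Delta-2} to absorb the additive $d_i(v)(1+\Delta_2)$ correction into $f_i\td_id_i(v)$, and \eqref{eq:initial-parameters-2}--\eqref{eq:bound-f} to write $|V_i|^{-1}=(1\pm O(f_i))(1-p)^{-i}/n$, I obtain
\[\Ex{d_{i+1}(v)-d_i(v)\mid\mathcal{F}_i}=-(1\pm Cf_i)\,p\,d_i(v)\]
for an absolute constant $C$. A direct comparison with $(1\pm f_{i+1})\td_{i+1}-(1\pm f_i)\td_i=-p\td_i(1\mp 16f_i)$, which is immediate from~\eqref{eq:error-function}, shows that the growth rate $(1+16p)/(1-p)$ of $f_i$ is tuned precisely so that $A_i^+$ has non-positive and $A_i^-$ non-negative conditional increments, provided $C$ is a small enough absolute constant (e.g.\ $C\le 15$), as a brief check confirms.

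\emph{Concentration and union bound.} Decompose $A_i^{\pm}(v)=A_0^{\pm}(v)+M_i^{\pm}+D_i^{\pm}$ into the martingale part and the definite-sign drift; the event $\{A_i^+(v)>0$ for some $i\le k\}$ forces $\max_{i\le k}M_i^+\ge f_0\td_0/2$. Each increment $|A_{i+1}^{\pm}-A_i^{\pm}|$ is bounded by $R=O(\Delta_2+p\td_0)=O(p^2n+\log n)$, with the random part $|d_{i+1}(v)-d_i(v)|\le\Delta_2$ from~\ref{item:P3}. The cumulative conditional variance satisfies $\Ex{(d_{i+1}-d_i)^2\mid\mathcal{F}_i}\le\Delta_2\cdot O(p\td_i)$, and the telescoping estimate $\sum_{i<k}(1-p)^i\le 1/p$ yields $\sigma^2=O(\Delta_2 pn)$. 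Freedman's inequality gives
\[\Pr{\max_{i\le k}M_i^+\ge f_0\td_0/2}\le\exp\!\left(-\Omega\!\left(\dfrac{(f_0\td_0)^2}{\Delta_2\,pn+Rf_0\td_0}\right)\right),\]
and a direct calculation using $f_0^2\td_0^2=16\log^2 n(pn\log pn+p^3n^2)$, $\Delta_2=O(p^2n+\log n)$, and the hypothesis $pn\ge(\log n)^{2+\eps}$ shows the exponent is at least $2^{-10}\log n\log pn$. A union bound over the $n$ vertices and both signs concludes $\Pr{\tau<k}\le n^{-2^{-11}\log pn}$.

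\emph{Main obstacle.} The most delicate step is the drift verification: the constants in~\eqref{eq:error-function} must be pinned down so that three error sources---variation of $d_i(u)$ across $u\in N_i(v)$, the $(1\pm f_i)$ factor in $|V_i|$ from~\ref{item:P1}, and codegree contributions capped by $\Delta_2$ from~\ref{item:P3}---all fit within the one-step budget provided by $f_{i+1}-f_i$. The absorption step~\eqref{eq:bound-Delta-2} of additive $\Delta_2$-errors into multiplicative $f_i\td_i$-errors is the binding constraint that forces the lower hypothesis $p\gg(\log n)^{2+\eps}/n$.
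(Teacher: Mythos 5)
Your proposal is correct and follows essentially the same route as the paper: the same stopped processes pinched between $(1\pm f_i)\td_i$, the same double-counting drift computation using \ref{item:P1}--\ref{item:P3}, and the same Freedman-plus-union-bound conclusion with matching variance and increment bounds. The only point you gloss over is that the trajectory increment $-p\td_i(1\mp 16f_i)$ should also be weighted by the survival probability $q_{v,i}\approx 1-p$ (since the comparison trajectory freezes when $v$ is removed), but since $p\ll f_0$ this correction is absorbed into the $\pm Cf_i$ slack exactly as in the paper's computation.
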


If $v \in V_i$, then $d_i(v)= |N(v) \cap N^c(\{ v_1,\ldots,v_i\})|$. When $G=G_{n,p}$ and the vertices $v$ and $v_1,\ldots,v_i$ are fixed, $d_i(v)$ is a binomial random variable with parameters $n-i-1$ and $(1-p)^ip$.
Thus, one may hope to combine Chernoff's inequality with the union bound to prove Theorem~\ref{thm:main-dem} in the case where $G=G_{n,p}$.
However, by doing so, one would obtain an upper bound which decays at least as $\exp(-pn+i\log n)$ for the probability that $d_i(v)\neq (1\pm f_i)\td_i$ for some $v$.
For $i = k$, this tends to 0 only if $p \ge n^{-1/2+o(1)}$.

The random greedy independent set process helps to overcome the limitations of the union bound when $p$ is significantly smaller than $n^{-1/2}$. A `typical' selection of $v_1, \ldots, v_i$ allows us to use the differential equation method to show that, with high probability, the degrees of vertices in $V_i$ remain close to their expected values.
Notably, $n^{-1}\log n$ emerges as a natural threshold for the range of $p$ where our method applies. This threshold ensures that the degrees in the host graph, such as $G_{n,p}$, remain close to the average degree -- an essential condition for our approach.

Let $\tau$ be the minimum between $k$ and the first time $i$ that $d_i(v) \neq (1 \pm f_i)\td_i $ for some $v \in V_i$.
For a vertex $v \in V_0$, we set
\begin{align}
	\sigma_v = \min \{ i: v \notin V_i \} \qquad \text{and} \qquad \rho_v = \min \{ \tau, \sigma_v-1 \}.
\end{align}
For each $v \in V_0$, we define the random processes $(X_{v,i}^{-})_{i \ge 0}$ and $(X_{v,i}^{+})_{i \ge 0}$ as follows: for ${\boldsymbol{\oplus}} \in \{-1,1\}$, let
\begin{align*}
	X_{v,i}^{{\boldsymbol{\oplus}}} = \begin{cases*} d_i(v) - \td_i \, {\boldsymbol{\oplus}} \, f_i \td_i, \text{
		 if } i \le \rho_v \\  d_{\rho_v}(v) - \td_{\rho_v} \, {\boldsymbol{\oplus}} \, f_{\rho_v} \td_{\rho_v}, \text{ if } i > \rho_v. \end{cases*}
\end{align*}
We set $(\mathcal{F}_i)_{i \ge 0}$ to be its natural filtration, that is, $\mathcal{F}_i$ is the $\sigma$-algebra generated by the random variables $X_0,\ldots,X_i$.

For every sequence $(h_i)_{i\ge 0}$ and every $i \ge 1$, define $\Delta h_i = h_{i} - h_{i-1}$ to be the increment of $h$ (sometimes also called discrete derivative). 
Our next lemma shows that the random processes $(X_{v,i}^{-})_{i \ge 0}$ and $(-X_{v,i}^{+})_{i \ge 0}$ are supermartingales. We also provide upper bounds for the absolute value of the increment and the expected increment of both processes.
Recall that a discrete random process $(Z_i, \mathcal{F}_i)_{i \ge 0}$ in a probability space $(\Omega,\mathcal{F},\mathbb{P})$ is a supermartingale if $\Ex{Z_i \mid \mathcal{F}_{i-1}} \le Z_{i-1}$ for every $i \ge 1$. 

\begin{lemma}\label{lem:supermartingale}
	For every $i\ge 1$ and $v \in V_0$, we have
	\begin{enumerate}[label=\normalfont(\roman*)]
		\item $\Ex{\Delta X_{v,i}^{-} | \F_{i-1} } \le 0$ and $\Ex{\Delta X_{v,i}^{+} | \F_{i-1} }\ge 0$;
		\vspace*{2mm}
		\item $ \Ex{|\Delta X_{v,i}^{\boldsymbol{\oplus}}| \big | \F_{i-1} } \le 3p\td_{i-1}$;
		\vspace*{2mm}
		\item $\max_i |\Delta X_{v,i}^{\boldsymbol{\oplus}}| \le 6p^2n + 2^7\log n$.
	\end{enumerate}
\end{lemma}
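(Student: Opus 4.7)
The plan is to compute $\Ex{\Delta X_{v,i}^{\boldsymbol{\oplus}} \mid \mathcal{F}_{i-1}}$ by conditioning on $G_{i-1}$ and averaging over the uniform choice of $v_i \in V_{i-1}$. First I would observe that $\Delta X_{v,i}^{\boldsymbol{\oplus}} = 0$ deterministically on $\{\rho_v \le i-1\}$, so only the $\mathcal{F}_{i-1}$-measurable event $\{v \in V_{i-1}\}\cap\{\tau \ge i\}$ contributes, and on it $\rho_v \ge i-1$. Setting $A \coloneqq V_{i-1}\setminus(\{v\}\cup N_{i-1}(v))$ and $Y_j \coloneqq d_j(v) - \td_j + \boldsymbol{\oplus}\, f_j\td_j$ for the ``live'' trajectory, if $v_i \notin A$ then $\sigma_v = i$, so $\rho_v = i-1$ and $\Delta X_{v,i}^{\boldsymbol{\oplus}} = 0$; if $v_i \in A$ then $d_i(v) = d_{i-1}(v) - d_{i-1}(v,v_i)$, and combining this with $\Delta \td_j = -p\td_{j-1}$ and $\Delta(f_j\td_j) = 16p f_{j-1}\td_{j-1}$ yields
\[ Y_i - Y_{i-1} = -d_{i-1}(v, v_i) + p\td_{i-1} + \boldsymbol{\oplus}\cdot 16 p f_{i-1}\td_{i-1}. \]
Averaging uniformly over $v_i$ gives, with $S \coloneqq \sum_{u \in A} d_{i-1}(v, u)$,
\[ \Ex{\Delta X_{v,i}^{\boldsymbol{\oplus}} \mid \mathcal{F}_{i-1}} = -\frac{S}{|V_{i-1}|} + \frac{|A|}{|V_{i-1}|}\bigl(p\td_{i-1} + \boldsymbol{\oplus}\cdot 16 p f_{i-1}\td_{i-1}\bigr). \]

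The heart of~(i) is the estimate $S/|V_{i-1}| = p\td_{i-1}(1 \pm O(f_{i-1}))$. I would decompose $S = \sum_{u \in V_{i-1}} d_{i-1}(v,u) - d_{i-1}(v) - \sum_{u \in N_{i-1}(v)} d_{i-1}(v,u)$; by double counting the first sum equals $\sum_{w \in N_{i-1}(v)} d_{i-1}(w)$, which on $\{\tau \ge i\}$ lies in $d_{i-1}(v)\td_{i-1}(1 \pm f_{i-1})$, while~\ref{item:P3} bounds the third sum by $d_{i-1}(v)\Delta_2$, with $\Delta_2 \ll f_{i-1}\td_{i-1}$ from~\eqref{eq:bound-Delta-2}, so both this and the $-d_{i-1}(v)$ term are absorbed into the error. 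Dividing by $|V_{i-1}| = (1\pm f_{i-1})(1-p)^{i-1} n$ from~\ref{item:P1}, using $d_{i-1}(v) = (1 \pm f_{i-1})\td_{i-1}$ on $\{\tau \ge i\}$, and noting $|A|/|V_{i-1}| = 1 - O(p)$ together with $p \ll f_{i-1}$ (since $f_0^2 \ge 16 p \log^2 n$), I obtain
\[ \Ex{\Delta X_{v,i}^{\boldsymbol{\oplus}} \mid \mathcal{F}_{i-1}} = \boldsymbol{\oplus}\cdot 16 p f_{i-1}\td_{i-1} \pm O\bigl(p f_{i-1}\td_{i-1}\bigr), \]
whose sign agrees with $\boldsymbol{\oplus}$ for $n$ large; this gives~(i).

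Parts~(ii) and~(iii) follow from the same decomposition. For~(ii), the triangle inequality applied inside the conditional expectation bounds it by $S/|V_{i-1}| + (|A|/|V_{i-1}|)\,p\td_{i-1}(1 + 16 f_{i-1}) \le 3 p\td_{i-1}$ once $f_{i-1}$ is small. For~(iii), the deterministic bound $d_{i-1}(v, v_i) \le d_0(v, v_i) \le \Delta_2 = 4p^2 n + 2^7 \log n$ from~\ref{item:P3}, together with $p\td_{i-1} \le p^2 n$ and $16 p f_{i-1}\td_{i-1} = o(p^2 n)$, gives $|\Delta X_{v,i}^{\boldsymbol{\oplus}}| \le \Delta_2 + p^2 n + o(p^2 n) \le 6 p^2 n + 2^7 \log n$.

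The main obstacle is the constant tracking in $S/|V_{i-1}| = p\td_{i-1}(1 \pm O(f_{i-1}))$: the growth rate $(1+16p)/(1-p)$ in the definition of $f_i$ is chosen precisely so that the main drift $\boldsymbol{\oplus}\cdot 16 p f_{i-1}\td_{i-1}$ strictly dominates the combined error coming from the spread of degrees in $G_{i-1}$ (controlled by $f_{i-1}$ via $\tau \ge i$), the common-neighbour correction through~\ref{item:P3}, and the $O(p)$ slack in $|A|/|V_{i-1}|$. Verifying that the resulting error constant is strictly smaller than $16$ is what makes the (super)martingale inequality in~(i) honest.
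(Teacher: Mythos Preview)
Your proposal is correct and follows essentially the same route as the paper: your set $A$ is the paper's $V_{i-1}\setminus N_{i-1}[v]$, your $S$ is its $M_{v,i-1}$, and the double-counting identity $S=\sum_{w\in N_{i-1}(v)}d_{i-1}(w)-d_{i-1}(v)-2e_{G_{i-1}}(N_{i-1}(v))$ together with~\ref{item:P3} and~\ref{item:P1} is exactly how the paper gets $S/|V_{i-1}|=(1\pm 4f_{i-1})p\td_{i-1}$. The only difference is bookkeeping: the paper carries explicit constants throughout (obtaining $(\boldsymbol{\oplus}16\pm5)f_{i-1}p\td_{i-1}$ for the drift), whereas you use $O(\cdot)$ and correctly flag that the implicit constant must be shown to be strictly below~$16$.
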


\begin{proof}
	We have $\Delta (X^{\boldsymbol{\oplus}}_{v,i}) = 0$ if $i \ge \rho_v+1 = \min\{\tau,\sigma_v-1\}+1$.
	In other words, if either $i \ge \tau +1$ or $i \ge \sigma_v$, then $\Delta (X^{{\boldsymbol{\oplus}}}_{v,i}) = 0$. Therefore,
	\[ \Delta (X^{{\boldsymbol{\oplus}}}_{v,i})  = \Delta (X^{{\boldsymbol{\oplus}}}_{v,i}) 1_{\{ \tau \ge i, \, \sigma_v > i\}} 
	\quad \text{and} \quad 
	\Ex{\Delta X_{v,i}^{{\boldsymbol{\oplus}}} | \F_{i-1} }  = \Ex{\Delta (X_{v,i}^{{\boldsymbol{\oplus}}}) 1_{ \{ \sigma_v > i\}}| \F_{i-1} } 1_{\{ \tau \ge i \}}.\]
	In the second equality we use that the event $\{\tau \ge i \}$ is $\mathcal{F}_{i-1}$-measurable.
	
	The event $\{ \sigma_v > i \}$ occurs if and only if $v \in V_i$.
	Moreover, $v \in V_i$ if and only if $v \in V_{i-1}$ and $v_i \notin N_{i-1}[v]:= N_{i-1}(v) \cup \{v\}$.
	Thus, we can write
	\begin{align}\label{eq:increment}
		\Delta (X^{{\boldsymbol{\oplus}}}_{v,i}) 1_{\{\sigma_v > i\}} & = \big ( - d_{i-1}(v_i, v) - \Delta \td_i \, {\boldsymbol{\oplus}} \,\Delta (f_i \td_i)\big )1_{\{\sigma_v > i\}} \nonumber \\
		&= \sum \limits_{u \in V_{i-1} \setminus N_{i-1}[v]} \Big (- d_{i-1}(u,v) - \Delta \td_i \, {\boldsymbol{\oplus}} \,\Delta (f_i \td_i) \Big )1_{\{ v_i = u\}}1_{ \{ v \in V_{i-1} \}}.
	\end{align}

	For simplicity, for every non-negative $j$ and $v \in V_j$, we set 
	\begin{align*}
		M_{v,j} \coloneqq \sum \limits_{u \in V_{j} \setminus N_{j}[v]} d_{j}(u,v) \qquad \text{and} \qquad q_{v,j} \coloneqq \Pr{v_{j+1} \notin N_{j}[v]} = 1-\dfrac{d_{j}(v)+1}{|V_{j}|}.
	\end{align*}
	As $\Pr{v_i = u} = |V_{i-1}|^{-1}$ for all $u \in V_{i-1}$ and the choice of $v_i$ is independent from the $\sigma$-algebra $\F_{i-1}$, it follows from \eqref{eq:increment} that
	\begin{align}\label{eq:increment-2}
		\Ex{\Delta (X_{v,i}^{{\boldsymbol{\oplus}}}) 1_{\{\sigma_v > i\}} | \F_{i-1} } = -\dfrac{M_{v,i-1} 1_{ \{ v \in V_{i-1} \}}}{|V_{i-1}|} + \big ( - \Delta \td_i \, {\boldsymbol{\oplus}} \,\Delta (f_i \td_i) \big )q_{v,i-1} 1_{ \{ v \in V_{i-1} \}}
	\end{align} 
	and that 
	\begin{align}\label{eq:increment-2-abs}
		\Ex{ | \Delta( X_{v,i}^{{\boldsymbol{\oplus}}}) 1_{\{\sigma_v > i\}} | \big | \F_{i-1} }  \le \dfrac{M_{v,i-1} 1_{ \{ v \in V_{i-1} \}}}{|V_{i-1}|} + | \Delta \td_i |  + | \Delta (f_i \td_i)|.
	\end{align} 

   Note that $\Delta \td_{i} = -p \td_{i-1}$, as $\td_i = (1-p)^i pn$ (by~\eqref{eq:expected-trajectory-degree}), and
   \begin{align}\label{eq:increment-fidi}
	\Delta (f_i \td_i)  = f_{i}\Delta \td_i + \td_{i-1} \Delta f_i = ((1-p)f_{i} - f_{i-1}) \td_{i-1} = 16f_{i-1}p\td_{i-1},
	\end{align}
	as $ f_i = \big (\frac{1+16p}{1-p} \big )^i f_0$ (see~\eqref{eq:error-function}).
	If the event $\{ \tau \ge i, v \in V_{i-1} \}$ occurs, then it follows from~\eqref{eq:bound-f},~\eqref{eq:initial-parameters-2} and~\eqref{eq:for-closed-neighbourhood} that
   \begin{align}\label{eq:M2}
	q_{v,i-1} = 1 - \dfrac{(1 \pm f_{i-1})\td_{i-1}+1}{(1\pm f_{i-1})(1-p)^{i-1}n} 
	=  1-p\pm 3f_{i-1}p.
   \end{align}

	Now, note that if the event $\{ \tau \ge i, \, v \in V_{i-1} \}$ occurs, then by~\eqref{eq:increment-fidi} and~\eqref{eq:M2} we obtain
	\begin{align}\label{eq:increment-det}
		\big ( - \Delta \td_i \, {\boldsymbol{\oplus}} \,\Delta (f_i \td_i) \big )q_{v,i-1} 
		& = (1 \, {\boldsymbol{\oplus}} \, 16f_{i-1})(1 - p \pm 3f_{i-1}p)p\td_{i-1}\nonumber \\
		& = (1+({\boldsymbol{\oplus}} 16 \pm 1)f_{i-1})p \td_{i-1}.
	\end{align}
	Above, we use that $p \ll f_0 < f_{i-1}$.

	Next, we estimate $M_{v,i-1}$. 
	Observe that $M_{v,i-1}$ equals the number of edges in $G_{i-1}$ which have one endpoint in $N_{i-1}(v)$ and another in $V_{i-1} \setminus N_{i-1}[v]$.
	By double-counting, we obtain
	\begin{align}\label{eq:M_1}
		M_{v,i-1} = \sum \limits_{u \in V_{i-1} \setminus N_{i-1}[v]} d_{i-1}(u,v) = \sum \limits_{u \in N_{i-1}(v)} d_{i-1}(u) - d_{i-1}(v) - 2e_{G_{i-1}}(N_{i-1}(v)),
	\end{align}
	where $e_{G_{i-1}}(N_{i-1}(v))$ denotes the number of edges induced by $G_{i-1}$ in the set $N_{i-1}(v)$.
	If the event $\{ \tau \ge i \}$ occurs, then we can replace $d_{i-1}(u)$ by $(1 \pm f_{i-1})\td_{i-1}$ for all $u \in N_{i-1}[v]$ in~\eqref{eq:M_1}.
	By using that $2e(N_{i-1}(v))\le d_{i-1}(v)\Delta_2$ by~\ref{item:P3}, we obtain that if the event $\{ \tau \ge i, \, v \in V_{i-1} \}$ occurs, then it follows from~\eqref{eq:bound-f}--\eqref{eq:for-closed-neighbourhood} and~\eqref{eq:M_1} that
	\begin{align}
		\dfrac{M_{v,i-1}}{|V_{i-1}|} & = \dfrac{(1\pm f_{i-1})\td_{i-1}}{|V_{i-1}|}\left ( (1\pm f_{i-1})\td_{i-1} - 1 \pm \Delta_2 \right )  \nonumber \\
		& = (1\pm f_{i-1}) \left (1 \pm \frac{3}{2}f_{i-1} \right ) p \left ( \left (1\pm \frac{4}{3}f_{i-1} \right )\td_{i-1} + o(f_{i-1}\td_{i-1}) \right )   \nonumber \\
		& =  (1\pm 4 f_{i-1}) p\td_{i-1}. \label{eq:bound-M1-new}
   \end{align}

	Next, we combine~\eqref{eq:increment-2},~\eqref{eq:increment-det} and~\eqref{eq:bound-M1-new}, and obtain 
	\begin{align*}
		\Ex{\Delta X_{v,i}^{{\boldsymbol{\oplus}}} | \F_{i-1} } & = 
		\Ex{\Delta (X_{v,i}^{{\boldsymbol{\oplus}}}) 1_{ \{ \sigma_v > i\}}| \F_{i-1} } 1_{\{ \tau \ge i \}}  \\
		& = ({\boldsymbol{\oplus}} 16 \pm 5)f_{i-1}p \td_{i-1} 1_{\{ \tau \ge i,\, v \in V_{i-1} \}}.
	\end{align*} 
	
	Thus,
	$\Ex{\Delta X_{v,i}^{-} | \F_{i-1} } \le 0$ and $\Ex{\Delta X_{v,i}^{+} | \F_{i-1} } \ge 0$.
	Similarly, by combining~\eqref{eq:increment-2-abs},~\eqref{eq:increment-fidi} and~\eqref{eq:bound-M1-new}, and using that $f_{i-1} = o(1)$ and $\Delta \td_{i} = -p \td_{i-1}$, we obtain
	\begin{align*}
		\Ex{|\Delta X_{v,i}^{ {\boldsymbol{\oplus}} }| \big | \F_{i-1} }\le 3p\td_{i-1}.
	\end{align*}

	Now it remains to bound $|\Delta X_{v,i}^{ {\boldsymbol{\oplus}} }|$ from above.
	If the event $\{ \sigma_v \le i\}$ occurs, then $|\Delta X_{v,i}^{ {\boldsymbol{\oplus}} }|=0$.
	Otherwise, $|\Delta X_{v,i}^{ {\boldsymbol{\oplus}} }|\le \max_{u \in V_{i-1}} d_{0}(u,v) + |\Delta \td_i| + |\Delta (f_i \td_i)|$.
	By property~\ref{item:P3} combined with~\eqref{eq:increment-fidi} and the fact that $|\Delta (f_i \td_i)| \ll |\Delta \td_i| = |p\td_{i-1}| \le |p\td_{0}| = p^2n$, we have 
	$\max_i |\Delta X_{v,i}^{ \boldsymbol{\oplus} }| \le 6p^2n + 2^7\log n$, which completes the proof.
\end{proof}

In order to prove Theorem~\ref{thm:main-dem}, we shall use Freedman's inequality.

\begin{lemma}[Freedman's inequality]\label{lem:freedman}
Let $(X_i, \mathcal{F}_i)_{i=0}^m$ be a supermartingale in a probability space $(\Omega,\mathcal{F},\mathbb{P})$,
where $\mathcal{F}_i \se \mathcal{F}$ and $X_i$ is a random variable in $(\Omega,\mathcal{F}_i)$ for all $i$.
Let $R \in \mathbb{R}$ be such that $\max_i | X_i - X_{i-1}| \le R$. Let $V(m)$ be the quadratic variation of the process until step $m$, that is,
\[
V(m) := \sum_{i=1}^m \Ex{(X_i - X_{i-1})^2 \mid \mathcal{F}_{i-1}}.
\]
Then, for every $t, s > 0$, we have
\[
\Pr{X_m - X_0 \geq t \text{ and }  V(m) \leq s } \le \exp\left(-\frac{t^2}{2(s + Rt)}\right).
\]
\end{lemma}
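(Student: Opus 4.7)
The plan is to prove Freedman's inequality via the standard exponential supermartingale method. Writing $Y_i := X_i - X_{i-1}$ and $\phi(\lambda) := (e^{\lambda R} - 1 - \lambda R)/R^2$ for $\lambda > 0$, the first step is the pointwise analytic inequality
\[
e^{\lambda x} \le 1 + \lambda x + \phi(\lambda)\, x^2 \qquad \text{valid for all } x \in [-R, R].
\]
This follows because $x \mapsto (e^{\lambda x} - 1 - \lambda x)/x^2$ (extended by continuity to $\lambda^2/2$ at $x=0$) is non-decreasing on $\mathbb{R}$ for any $\lambda > 0$, so its value at any $x \le R$ is at most its value at $R$; a short calculus check via first and second derivatives verifies the monotonicity.

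The second step is to set
\[
M_i := \exp\!\bigl(\lambda(X_i - X_0) - \phi(\lambda)\, V(i)\bigr), \qquad M_0 = 1,
\]
and verify that $(M_i, \F_i)_{i=0}^m$ is a non-negative supermartingale. Applying the pointwise inequality to $Y_i$, taking conditional expectation, and using $\Ex{Y_i \mid \F_{i-1}} \le 0$ (from the supermartingale assumption on $X_i$) together with $1 + z \le e^z$ gives
\[
\Ex{e^{\lambda Y_i} \mid \F_{i-1}} \le 1 + \phi(\lambda)\,\Ex{Y_i^2 \mid \F_{i-1}} \le \exp\!\bigl(\phi(\lambda)\,\Ex{Y_i^2 \mid \F_{i-1}}\bigr),
\]
from which $\Ex{M_i \mid \F_{i-1}} \le M_{i-1}$ follows after multiplying through by the $\F_{i-1}$-measurable quantity $\exp(\lambda(X_{i-1} - X_0) - \phi(\lambda) V(i-1))$.

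Next, on the event $E := \{X_m - X_0 \ge t,\ V(m) \le s\}$ we have $M_m \ge \exp(\lambda t - \phi(\lambda) s)$, since $\phi(\lambda) \ge 0$, so
\[
\exp\!\bigl(\lambda t - \phi(\lambda) s\bigr)\,\Pr{E} \le \Ex{M_m \cdot 1_E} \le \Ex{M_m} \le M_0 = 1,
\]
yielding $\Pr{E} \le \exp(-\lambda t + \phi(\lambda) s)$ for every $\lambda > 0$. Optimizing, I would take $\lambda := R^{-1} \log(1 + Rt/s)$, so that $e^{\lambda R} - 1 = Rt/s$, which rewrites the exponent as $-(s/R^2)\, h(Rt/s)$ where $h(u) := (1+u)\log(1+u) - u$. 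The classical elementary inequality $h(u) \ge u^2/(2 + 2u/3)$ for $u \ge 0$ then gives an exponent at most $-t^2/(2s + \tfrac{2}{3}Rt) \le -t^2/(2(s+Rt))$, which is the desired bound.

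The only genuinely delicate step is the pointwise inequality $e^{\lambda x} \le 1 + \lambda x + \phi(\lambda) x^2$ on $[-R, R]$ and the closely related estimate $h(u) \ge u^2/(2+2u/3)$; both reduce to routine but slightly finicky calculus lemmas about convex functions. Everything else is bookkeeping around an exponential supermartingale plus a single application of the Markov inequality, without need for optional stopping since we only evaluate $M$ at the deterministic time $m$.
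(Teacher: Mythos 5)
The paper does not prove this lemma at all: Freedman's inequality is stated as a known tool and used as a black box, so there is no in-paper argument to compare against. Your proof is the standard (and correct) exponential-supermartingale argument going back to Freedman: the pointwise bound $e^{\lambda x}\le 1+\lambda x+\phi(\lambda)x^2$ on $[-R,R]$ (which follows cleanly from the integral representation $(e^{y}-1-y)/y^{2}=\int_0^1(1-v)e^{yv}\,dv$, if you want to avoid the finicky derivative check), the compensated process $M_i=\exp(\lambda(X_i-X_0)-\phi(\lambda)V(i))$ being a supermartingale, Markov's inequality on the event $\{X_m-X_0\ge t,\ V(m)\le s\}$, and optimisation of $\lambda$. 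The key structural point — that including $V(i)$ inside the exponential lets you restrict to $\{V(m)\le s\}$ without any stopping-time argument — is exactly Freedman's device, and you have it right. Your optimisation in fact yields the stronger Bernstein-form exponent $-t^2/(2s+\tfrac23 Rt)$, from which the stated bound follows since $2s+\tfrac23Rt\le 2(s+Rt)$. The only pedantic caveats are that the argument implicitly assumes $R>0$ (if $R=0$ the conclusion is trivial since $X_m=X_0$) and that the two calculus facts you defer ($e^{\lambda x}\le 1+\lambda x+\phi(\lambda)x^2$ and $h(u)\ge u^2/(2+2u/3)$) would need to be written out in a self-contained proof; both are true and standard.
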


Now we are ready to prove Theorem~\ref{thm:main-dem}.

\begin{proof}[Proof of Theorem~\ref{thm:main-dem}]
	For every $i \in [k-1]$, the event $\{\tau = i\}$ occurs if and only if there exists a vertex $v \in V_i$ such that $d_i(v) \neq (1 \pm f_i)\td_i$. 
    Thus, it suffices to show that $\Pr{\tau < k} \le n^{-2^{-11}\log pn}$.

	Let $i \in [k]$ and $v \in V_i$. 
	We have $d_i(v) > (1 + f_i)\td_i$ or $d_i(v) < (1 - f_i)\td_i$ if and only if $X_{v,i}^{-} > 0$ or $X_{v,i}^{+}< 0$, respectively.
	Then, we can write
	\begin{align*}
		\Pr{ \tau = i } \le  \sum_{v \in V(G)} \Pr{ \{\tau \ge i \} \cap \{ X_{v,i}^{-} > 0 \}} + \Pr{ \{\tau \ge i \} \cap \{ X_{v,i}^{+} < 0 \}}.
	\end{align*}
	By Property~\ref{item:P2}, we have $d_0(v) =  \big (1 \pm \frac{f_0}{2} \big )\td_0$, which implies that $X_{v,0}^{-} \le - \td_0 f_0/2$ and $X_{v,0}^{+} \ge \td_0 f_0/2$
	for all $v \in V_0$.
	In particular, if either $\{ X_{v,i}^{-} > 0 \}$ or $\{ X_{v,i}^{+} < 0 \}$ occur, then we must have either
	\begin{align*}
		X_{v,i}^{-}-X_{v,0}^{-} \ge \dfrac{\td_0 f_0}{2} \qquad \text{or} \qquad -X_{v,i}^{+}+X_{v,0}^{+} \ge \dfrac{\td_0 f_0}{2},
	\end{align*}
	respectively.
	By Lemma~\ref{lem:supermartingale}, the random processes $(X^{-}_{v,j})_{j=0}^i$ and $(-X^{+}_{v,j})_{j=0}^i$ are supermartingales.
	Thus, we can bound the probability of occurence of these events via Freedman's inequality (Lemma~\ref{lem:freedman}).

	It follows from Lemma~\ref{lem:supermartingale} (ii) and (iii)  that
	\begin{align*}
		\sum_{j=1}^i \Ex{(\Delta X_{v,j}^{\boldsymbol{\oplus}})^2 \mid \mathcal{F}_{j-1}} 
		&\le \max_j |\Delta X_{v,j}^{\oplus}| \sum_{j=1}^i \Ex{|\Delta X_{v,j}^{\boldsymbol{\oplus}}| \mid \mathcal{F}_{j-1}} \\
		& \le 2^9(p^3n+p\log n) \sum \limits_{j=1}^i \td_{j-1} \\
		& = 2^9(p^4n^2+p^2n\log n) \sum \limits_{j=1}^i (1-p)^{j-1} \\
		& \le 2^9  (p^3n^2+pn\log n).
	\end{align*}

	By Freedman's inequality, and using that $f_0 = o(1)$ and $\td_0 = pn$, we have 
	\begin{align}\label{eq:freedman-appl-1}
		\Pr{ X_{v,i}^{-}-X_{v,0}^{-} \ge \dfrac{f_0\td_0}{2}} & \le 
		\exp\left(-\frac{(f_0\td_0)^2}{2^{12}(p^2n+\log n)(pn + f_0\td_0)}\right) \nonumber \\
		& \le \exp\left(-\frac{f_0^2pn}{2^{13}(p^2n+\log n)}\right) \nonumber \\
		& \le \exp\left(-\frac{\log^2 n \log pn + p^2n \log^2 n }{2^{9}(p^2n+\log n)}\right).
	\end{align}
	In the last inequality, we use the definition of $f_0$ (see~\eqref{eq:error-function}).
	Let $E$ denote the absolute value of the exponent in the last term. 
	If $p^2n \ge \log n$, then $E \ge 2^{-9}p^2n\log^2n(p^2n+\log n)^{-1} \ge 2^{-10}\log^2n$, and hence
	the right-hand side of~\eqref{eq:freedman-appl-1} is at most $\exp(-2^{-10}\log^2 n)$; if $p^2n < \log n$, then $E \ge 2^{-9}\log^2 n \log pn(p^2n+\log n)^{-1} \ge 2^{-10}\log n \log pn$, and hence
	the right-hand side of~\eqref{eq:freedman-appl-1} is at most $\exp(-2^{-10}\log n \log pn)$.
	Therefore, in both cases we have
	\begin{align*}
		\Pr{X_{v,i}^{-}-X_{v,0}^{-} \ge \dfrac{f_0\td_0}{2}} \le n^{-2^{-10}\log pn} \quad \text{and} \quad 
		\Pr{-X_{v,i}^{+}+X_{v,0}^{+} \ge \dfrac{f_0\td_0}{2}} \le n^{-2^{-10}\log pn}.
	\end{align*}
	Summing over all $v \in V_0$ and $i \in [k-1]$, we obtain that $\Pr{\tau < k} \le n^{-2^{-11}\log pn}.$
\end{proof}

\section{Proof of Theorem~\ref{thm:prob-non-edge}}\label{sec:prob-cover} 

In this section we prove Theorem~\ref{thm:prob-non-edge}. 
Recall the random greedy independent set process described in Section \ref{sec:proofs} and its analysis in Section~\ref{sec:random-greedy}.
We denote $G_0 = G$, $V_0 = V(G_0)$, and $v_1, \ldots, v_i$ the vertices selected in the first $i$ steps of the process. 
For $i \ge 1$, we set $V_i = N^c(\{v_1,\ldots,v_{i}\})$ to be the common non-neighbourhood of $v_1, \ldots, v_i$.
As in Section~\ref{sec:random-greedy}, we denote by $\tau$ the minimum between $k =  \eps 2^{-10}p^{-1} \log pn$ and the first time $i$ that $d_i(v) \neq (1 \pm f_i) \td_i $ for some $v \in V_i$.

Conditioned on the event $\{\tau \ge i\}$, our next lemma yields the probability that a specified vertex is chosen at step $i$ given that it is in $G_{i-1}$, as well as the probability that a vertex or non-edge of $G_{i-1}$ is in $G_{i}$.

\begin{lemma}\label{lemma:prob-vertex-non-edge}
	Let $\eps \in (0,2^{-10})$ and $(\log n)^{2+\eps} n^{-1} \le p < n^{-\eps}$. 
	Let $G$ be an $n$-vertex $(\eps,p)$-typical graph and $k = \eps 2^{-10}p^{-1} \log pn $.
	Let $s \in [k]$ and let $J \se V(G)$ be an independent set in $G$ with $|J| \in [2]$.
	If $v \in J$ and $\E \se \{ J \se V_{s-1}, \, \tau \ge s \}$, with $\Pr{\E}>0$, then
	\begin{enumerate}[label=\normalfont(\roman*)]
		\item $\Pr{ v_s = v \big | \E } = (1\pm 2f_{s-1})(1-p)^{-s+1} n^{-1}$, and \vspace*{2pt}
		\item $\Pr{ J \se V_{s} \big | \E } = 1- |J|\big (1 \pm 35f_{s-1}/12 \big ) p.$
	\end{enumerate}
\end{lemma}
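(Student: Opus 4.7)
The plan is to exploit the fact that the event $\E$ is $\F_{s-1}$-measurable (both $\{J\se V_{s-1}\}$ and $\{\tau \ge s\}$ are determined by $v_1,\dots,v_{s-1}$), so it suffices to verify each estimate conditional on every history realising $\E$ and then average. For any such history, property~\ref{item:P1} applied to $S=\{v_1,\dots,v_{s-1}\}$ (valid since $|S|\le k-1<\eps p^{-1}\log pn$) gives $|V_{s-1}| = (1\pm f_{s-1})(1-p)^{s-1}n$, while $\{\tau\ge s\}$ gives $d_{s-1}(w) = (1\pm f_{s-1})\td_{s-1}$ for every $w \in V_{s-1}$.

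For part~(i), $v_s$ is uniform on $V_{s-1}$ conditional on any such history, and $v\in V_{s-1}$ because $J \se V_{s-1}$. Hence the conditional probability of $\{v_s = v\}$ is $1/|V_{s-1}|$, and by $1/(1\pm f_{s-1}) = 1 \pm \tfrac{3}{2}f_{s-1}$ from~\eqref{eq:bound-f} this lies deterministically in $(1\pm 2f_{s-1})(1-p)^{-s+1}n^{-1}$, which remains true after averaging.

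For part~(ii), $\{J\se V_s\}$ is equivalent to $\{v_s\notin \bigcup_{w\in J}N_{s-1}[w]\}$, so conditional on an admissible history the probability of $\{J\se V_s\}$ equals $1 - \bigl|\bigcup_{w\in J}N_{s-1}[w]\bigr|/|V_{s-1}|$. For $|J|=1$, $|N_{s-1}[v]| = d_{s-1}(v)+1 = (1\pm \tfrac{4}{3}f_{s-1})\td_{s-1}$ by~\eqref{eq:for-closed-neighbourhood}. For $|J|=2$ with $J=\{u,v\}$ and $uv\notin E(G)$, one has $N_{s-1}[u]\cap N_{s-1}[v] = N_{s-1}(u)\cap N_{s-1}(v)$, whose size is at most $\Delta_2$ by~\ref{item:P3}; inclusion--exclusion together with $\Delta_2\ll f_0\td_0 \le f_{s-1}\td_{s-1}$ from~\eqref{eq:bound-Delta-2} then yields
\[\bigl|{\textstyle\bigcup_{w\in J}}N_{s-1}[w]\bigr| = |J|\bigl(1 \pm (\tfrac{4}{3}+o(1))f_{s-1}\bigr)\td_{s-1}.\]
Dividing by $|V_{s-1}|$ and using $\td_{s-1}/|V_{s-1}| = (1\pm\tfrac{3}{2}f_{s-1})p$ produces $|J|p(1 \pm \delta)$ with $\delta = (\tfrac{4}{3}+\tfrac{3}{2}+o(1))f_{s-1} = (\tfrac{17}{6}+o(1))f_{s-1}$; for $n$ large this is bounded by $\tfrac{35}{12}f_{s-1}$, and averaging over histories completes the bound.

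The main obstacle is purely bookkeeping: arranging that the $+1$ terms inside closed neighbourhoods, the $\pm\Delta_2$ contribution from the codegree, and the multiplicative cross term $(\tfrac{4}{3}f_{s-1})(\tfrac{3}{2}f_{s-1}) = 2f_{s-1}^2$ all fit into the $\tfrac{1}{12}f_{s-1}$ slack between $\tfrac{34}{12}=\tfrac{17}{6}$ and $\tfrac{35}{12}$. This is straightforward given $f_{s-1}=o(1)$ together with~\eqref{eq:bound-Delta-2}, which ensures $\Delta_2/\td_{s-1} = o(f_{s-1})$.
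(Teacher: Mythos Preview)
Your argument is correct and matches the paper's approach: decompose $\E$ over histories (the paper conditions on $\{G_{s-1}=H\}$), use~\ref{item:P1} for $|V_{s-1}|$ together with the degree control from $\{\tau\ge s\}$, and for part~(ii) apply inclusion--exclusion with~\ref{item:P3} and the estimates~\eqref{eq:bound-f},~\eqref{eq:bound-Delta-2},~\eqref{eq:for-closed-neighbourhood} to land on the same $(17/6+o(1))f_{s-1}<35f_{s-1}/12$ bound. One nitpick: being a subset of an $\F_{s-1}$-measurable event does not by itself make $\E$ $\F_{s-1}$-measurable; the paper tacitly assumes the same thing when it writes ``$\E$ is the disjoint union of such events'', and the assumption holds in every application of the lemma.
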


\begin{proof}
	Let $H \se G$ be an induced subgraph with 
	$\{G_{s-1} = H\} \se \E$ and $\Pr{G_{s-1} = H} > 0$.
	Observe that $\E$ is the disjoint union of such events for different choices of $H$.
	Note that assuming $\{G_{s-1} = H\}$ implies $v \in V(G_{s-1})$.
	In addition, $v_s$ is chosen uniformly from $V(G_{s-1})$.
	Hence, $\Pr{ v_s = v \big | G_{s-1} = H }  = v(H)^{-1}$.
	Note that, by the choice of $H$ and~\eqref{eq:initial-parameters-2}, we have $v(H) = (1\pm f_{s-1})(1-p)^{s-1} n$.
	By combining this with~\eqref{eq:bound-f}, we obtain 
	\begin{align}\label{eq:item-i-cond-H}
		\Pr{ v_s = v \big |  G_{s-1} = H} 
		& = \dfrac{1 }{v(H)}  = \dfrac{1 \pm 2f_{s-1}}{(1-p)^{s-1} n}.
	\end{align}

	By the choice of $H$, we have $d_H(u) = (1\pm f_{s-1})\td_{s-1}$ for all $u \in V(H)$.
	Thus, it follows from~\ref{item:P3} and the estimates~\eqref{eq:bound-f},~\eqref{eq:bound-Delta-2} and~\eqref{eq:for-closed-neighbourhood} that
	\begin{align}\label{eq:item-ii-cond-H}
		\Pr{ J \se V_{s} \big | G_{s-1} = H } = 1 - \dfrac{\big | \cup_{u \in J} N_H[u] \big |}{v(H)} 
		& = 1 - \dfrac{|J|\big ( (1\pm f_{s-1})\td_{s-1} + 1 \big ) \pm \Delta_2 }{v(H)} \nonumber \\
		& =  1- |J|\Big (1 \pm \big (17/6 +o(1)\big )f_{s-1} \Big ) p \nonumber\\
		& =  1- |J|\Big (1 \pm 35f_{s-1}/12 \Big ) p.
	\end{align}
	In the last inequality, we use that $(1\pm f_{s-1})\td_{s-1} + 1 \big ) \pm \Delta_2 = \Big ( 1 \pm \big (4/3+o(1) \big )f_i \Big )$
	and that $v(H)^{-1} = (1 \pm 3f_{s-1}/2)(1-p)^{-(s-1)}n^{-1}$.
	Items (i) and (ii) follow from~\eqref{eq:item-i-cond-H} and~\eqref{eq:item-ii-cond-H}, respectively, by summing over all induced subgraphs $H \se G$ such that 
	$\{G_{s-1} = H\} \se \E$ and $\Pr{G_{s-1} = H} > 0$.
\end{proof}

For $u,v \in V_0$, $0 \le i < j \le k$ and $0 \le t \le j$, set
\begin{align*}
	\E_t^{i,j}(u,v) \coloneqq \begin{cases*} 
		\big \{ u,v \in V_t, \, \tau > t \big \} & \text{ if } $0 \le t < i;$ \vspace{0.5pt}\\ 
		\big \{ v_i = u, \, v \in V_t, \, \tau > t \big \} & \text{ if } $i \le t < j;$ \vspace{0.5pt}\\
		\big \{ v_i = u, \, v_j = v, \, \tau > j \big \} & \text{ if } $t = j.$
	\end{cases*}
\end{align*}
Our next lemma is a crucial step towards proving Theorem~\ref{thm:prob-non-edge}, as it yields the asymptotic behaviour of $\Pr{\E_t^{i,j}(u,v) \big | \E_{t-1}^{i,j}(u,v)}$ for all $t \in [j]$.

\begin{lemma}\label{lem:prob-non-edge}
	Let $\eps \in (0,2^{-10})$ and $(\log n)^{2+\eps} n^{-1} \le p < n^{-\eps}$. 
	Let $G$ be an $n$-vertex $(\eps,p)$-typical graph and $k = \eps 2^{-10}p^{-1} \log pn $.
	For all distinct $u,v \in V_{0}$ such that $uv \notin E(G_{0})$ and $0 \le i < j \le k$, we have
	\begin{align*}
		\Pr{\E_t^{i,j}(u,v) \big | \E_{t-1}^{i,j}(u,v)} = 
		\begin{cases*}
			1 - (1 + \mathbbm{1}_{\{t < i\}})( 1\pm 3f_{t-1} )p & \text{ if } $t \in [j-1] \setminus \{i\};$ \vspace{4pt}\\ (1 \pm 3f_{t-1})(1-p)^{-t+1} n^{-1} & \text{ if } $t \in \{i,j\}.$
		\end{cases*}
	\end{align*}
\end{lemma}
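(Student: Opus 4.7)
My plan is to decompose $\mathcal{E}_t^{i,j}(u,v)$ into a \emph{selection} event and a \emph{typicality} event, compute the former via Lemma~\ref{lemma:prob-vertex-non-edge}, and bound the latter using Theorem~\ref{thm:main-dem}. Write $\mathcal{E}_t^{i,j}(u,v) = A_t \cap \{\tau > t\}$, where $A_t$ encodes only the vertex/non-edge selection: $A_t = \{u,v \in V_t\}$ for $t < i$, $A_t = \{v_i = u,\, v \in V_t\}$ for $i \le t < j$, and $A_j = \{v_i = u,\, v_j = v\}$. Since $\{\tau > t\} \subseteq \{\tau > t-1\}$, we have
\[
	\Pr{\mathcal{E}_t^{i,j}(u,v) \mid \mathcal{E}_{t-1}^{i,j}(u,v)} \;=\; \Pr{A_t \mid \mathcal{E}_{t-1}^{i,j}(u,v)} \;-\; \Pr{A_t \cap \{\tau = t\} \mid \mathcal{E}_{t-1}^{i,j}(u,v)}.
\]

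For the main term I apply Lemma~\ref{lemma:prob-vertex-non-edge} directly. When $t \in [j-1] \setminus \{i\}$, take $J = \{u,v\}$ if $t < i$ and $J = \{v\}$ if $i < t < j$; the hypothesis $uv \notin E(G)$ makes $J$ an independent set, and one verifies $\mathcal{E}_{t-1}^{i,j}(u,v) \subseteq \{J \subseteq V_{t-1},\, \tau \ge t\}$, so part~(ii) yields $\Pr{A_t \mid \mathcal{E}_{t-1}^{i,j}} = 1 - |J|(1 \pm 35f_{t-1}/12)p$ with $|J| = 1 + \mathbbm{1}_{\{t < i\}}$, exactly the shape of the claim. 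When $t \in \{i, j\}$, the non-edge hypothesis forces $\{v \in V_i\}$ to hold automatically given $\{v_i = u,\, v \in V_{i-1}\}$ (and symmetrically for $t = j$), so $A_t$ collapses to a single-vertex selection event $\{v_t = u\}$ or $\{v_t = v\}$; part~(i) then yields $(1 \pm 2f_{t-1})(1-p)^{-t+1}n^{-1}$.

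For the error term I bound $\Pr{A_t \cap \{\tau = t\} \mid \mathcal{E}_{t-1}^{i,j}} \le \Pr{\tau \le k}/\Pr{\mathcal{E}_{t-1}^{i,j}}$, and Theorem~\ref{thm:main-dem} gives the numerator $\le n^{-2^{-11}\log pn}$. I obtain a polynomial lower bound $\Pr{\mathcal{E}_{t-1}^{i,j}} \ge n^{-2 - O(\eps)}$ by induction: $\Pr{\mathcal{E}_0^{i,j}} = 1$ by $(\eps,p)$-typicality, and using only the crude bounds $1 - 4p$ (for $s \in [t-1] \setminus \{i,j\}$) and $(2n)^{-1}(1-p)^{-s+1}$ (for $s \in \{i,j\} \cap [t-1]$) on the earlier conditional probabilities, the telescoping product together with $(1-3p)^k \ge n^{-2^{-7}\eps}$ from~\eqref{eq:asymptotics} yields the bound. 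The error is thus $n^{-\Omega(\log pn)}$, which is super-polynomially smaller than the slack $f_{t-1}p/12$ or $f_{t-1}(1-p)^{-t+1}/n$ separating the Lemma~\ref{lemma:prob-vertex-non-edge} coefficients from the claimed $3f_{t-1}$ form.

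The main obstacle is the mild circularity in this lower bound on $\Pr{\mathcal{E}_{t-1}^{i,j}}$: the sharp analysis at step $t$ depends on the lower bound, which is itself a product of earlier conditional probabilities. I resolve it by first propagating only the crude lower bounds through the induction---these require merely that the $\{\tau = s\}$ correction be smaller than, say, $p$, which holds trivially for $n$ large---and then upgrading to the claimed $3f_{t-1}$ formula once the polynomial lower bound on $\Pr{\mathcal{E}_{t-1}^{i,j}}$ is established.
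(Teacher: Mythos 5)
Your proposal is correct and follows essentially the same route as the paper: the main term comes from Lemma~\ref{lemma:prob-vertex-non-edge} with the same choices of $J$ (and the same observation that $uv\notin E(G)$ collapses $A_i$, $A_j$ to single-vertex selection events), the $\{\tau=t\}$ correction is controlled by Theorem~\ref{thm:main-dem} together with an inductively established polynomial lower bound on $\Pr{\E_{t-1}^{i,j}(u,v)}$, and the slack between $35/12$ and $3$ (resp.\ between $2f_{t-1}$ and $3f_{t-1}$) absorbs the super-polynomially small error. The only, harmless, difference is organizational: you run a crude first pass to obtain the lower bound on $\Pr{\E_{t-1}^{i,j}(u,v)}$ and then upgrade to the sharp formula, whereas the paper interleaves the sharp estimate and the lower bound in a single induction.
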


\begin{proof}
	Fix $i,j$ with $0 \le i < j \le k$ and distinct $u,v \in V_{0}$ such that $uv \notin E(G_{0})$.
	For simplicity, write $\E_t = \E_t^{i,j}(u,v)$ for all $t$.

	Let $t \in [i-1]$. It follows from the bound $\Pr{\tau = t} \le n^{-2^{-11}\log pn}$ given by Theorem~\ref{thm:main-dem} and Lemma~\ref{lemma:prob-vertex-non-edge}(ii) with $s=t$, $J = \{u,v\}$ and $\E = \E_{t-1}$ that
	\begin{align}\label{eq:first-cond-Et}
		\Pr{\E_{t}|\E_{t-1}} & = \Pr{u,v \in V_{t} | \E_{t-1}} \pm \Pr{\tau = t | \E_{t-1}}  \nonumber \\
		& = 1- 2\big (1 \pm 35f_{t-1}/12 \big ) p \pm \dfrac{n^{-2^{-11}\log pn}}{\Pr{\E_{t-1}}}.
	\end{align}
	We shall use~\eqref{eq:first-cond-Et} to prove by induction that $ \Pr{\E_{t}|\E_{t-1}} = 1 - 2(1\pm 3f_{t-1})p$ for all $t \in [i-1]$.
	By~\ref{item:P2} we have $\Pr{\E_0} = \Pr{\tau \ge 1} =1$.
	As $n^{-2^{-11}\log pn} \ll n^{-3/2} \le f_{0}p$ by~\eqref{eq:asymptotics} and $pn \gg 1$, we obtain from~\eqref{eq:first-cond-Et} that $\Pr{\E_1|\E_0} = 1 - 2(1\pm 3f_{0})p$.
 	This settles the base case.

 	Suppose that $2 \le t \le i-1$ and that $\Pr{\E_s|\E_{s-1}} = 1 - 2(1\pm 3f_{s-1})p \ge 1-3p$ for all $s \in [t-1]$.
	As $\E_s \se \E_{s-1}$ and $\Pr{\E_0}=1$, it follows from the induction hypothesis that $\Pr{\E_s} \ge (1-3p)^s$ for all $s \in [t-1]$.
	By combining~\eqref{eq:asymptotics} and~\eqref{eq:first-cond-Et}, we obtain
	\begin{align}\label{eq:lower-bound-Et}
		\Pr{\E_{t}|\E_{t-1}} & = 1- 2\big (1 \pm 35f_{t-1}/12 \big ) p \pm \dfrac{n^{-2^{-11}\log pn}}{(1-3p)^{t-1}} \nonumber\\
		& = 1- 2\big (1 \pm 35f_{t-1}/12 \big ) p \pm n^{-2^{-12}\log pn} \nonumber\\
		& = 1 - 2(1\pm 3f_{t-1})p \ge 1-3p.
	\end{align}
	This completes the inductive argument.

	Next, we show that $\Pr{\E_{i} \big | \E_{i-1}}\approx (1-p)^{-i+1}n^{-1}$.
	Indeed, we have
	\begin{align*}
		\Pr{\E_{i}} & = \Pr{v_i = u, \, \{ u, v \} \se V_{i-1}, \, \tau \ge i  } \pm \Pr{ \tau = i}\\
		& = \Pr{v_i = u, \, \E_{i-1} } \pm n^{-2^{-11}\log pn}\\
		& = \dfrac{1 \pm 2f_{i-1}}{(1-p)^{i-1} n}\Pr{\E_{i-1}} \pm n^{-2^{-11}\log pn}.
	\end{align*}
	In the second equality, we use Theorem~\ref{thm:main-dem}; and in the third inequality we use~\eqref{eq:bound-f} and Lemma~\ref{lemma:prob-vertex-non-edge}(i) with $s=i$, $J =\{u,v\}$ and $\E = \E_{i-1}$.
	It follows from~\eqref{eq:asymptotics} and~\eqref{eq:lower-bound-Et} that
	\begin{align*}
		 \dfrac{f_{i-1}\Pr{\E_{i-1}}}{(1-p)^{i-1} n} \ge \dfrac{f_0(1-3p)^{i-1}}{n} \ge n^{-2^{-6}\eps-3/2}\gg n^{-2^{-11}\log pn},
	\end{align*}
	and hence, again by~\eqref{eq:asymptotics} and~\eqref{eq:lower-bound-Et},
	\begin{align}\label{eq:prob-Aii}
		\Pr{\E_{i}} = \dfrac{1 \pm 3f_{i-1}}{(1-p)^{i-1} n}\Pr{\E_{i-1}} \ge \dfrac{1}{2n}\Pr{\E_{i-1}} \ge \dfrac{(1-3p)^{i-1}}{2n} \gg n^{-2^{-12}\log pn}.
	\end{align}

	Let $t \in \{ i+1,\ldots,j-1\}$.
	Analogously to~\eqref{eq:first-cond-Et}, it follows from Theorem~\ref{thm:main-dem} and Lemma~\ref{lemma:prob-vertex-non-edge}(ii) with $s=t$, $J = \{v\}$ and $\E = \E_{t-1}$ that
	\begin{align}\label{eq:first-cond-Et-2}
		\Pr{\E_{t}|\E_{t-1}}  & = \Pr{v \in V_t |\E_{t-1}} \pm \Pr{\tau = t |\E_{t-1}}  \nonumber \\
		& = 1- \big (1 \pm 35f_{t-1}/12 \big ) p \pm \dfrac{n^{-2^{-11}\log pn}}{\Pr{\E_{t-1}}}.
	\end{align}
	Next, we show that $\Pr{\E_{t}|\E_{t-1}} = 1 - (1\pm 3f_{t-1})p$ for $t \in \{ i+1,\ldots, j-1\}$, again by induction.
	By combining~\eqref{eq:prob-Aii} with~\eqref{eq:first-cond-Et-2} we obtain $\Pr{\E_{i+1}|\E_{i}} = 1 - ( 1\pm 3f_{i} )p.$
	This establishes the base case of the induction.

	Suppose that $t \in \{ i+1,\ldots,j-1\}$ and that $\Pr{\E_s|\E_{s-1}} = 1 - (1\pm 3f_{s-1})p \ge 1-2p$ for all $s \in \{i+1,\ldots,t-1\}$.
	As $\E_{s} \se\E_{s-1}$ and $f_{s-1} = o(1)$, it follows from~\eqref{eq:asymptotics},~\eqref{eq:prob-Aii} and the induction hypothesis that
	\begin{align*}
		\Pr{\E_{t-1}} = \Pr{\E_{i}}  \prod \limits_{s=i+1}^{t-1} \dfrac{\Pr{\E_s}}{\Pr{\E_{s-1}}}  = \Pr{\E_{i}}  \prod \limits_{s=i+1}^{t-1} \Pr{\E_s|\E_{s-1}} \ge \dfrac{(1-2p)^k}{2n} \ge n^{-2^{-12}\log pn}.
	\end{align*}
	We complete the inductive argument by combining these bounds with~\eqref{eq:asymptotics} and~\eqref{eq:first-cond-Et-2}.

	Finally, it remains to show that $\Pr{\E_{j}|\E_{j-1}} = (1 \pm 3f_{j-1})(1-p)^{-j+1} n^{-1}$. For this, note that, by Theorem~\ref{thm:main-dem} and Lemma~\ref{lemma:prob-vertex-non-edge}(i) with $s=j$, $J = \{u,v\}$ and $\E = \E_{j-1}$, we have
	\begin{align}\label{eq:Ej1}
		\Pr{\E_{j}} = \Pr{v_j = v, \, v_i = u, \, \tau \ge j+1  } & =  \Pr{v_j = v, \, \E_{j-1} }
		\pm \Pr{ \tau = j} \nonumber \\
		& = \dfrac{1 \pm 2f_{j-1}}{(1-p)^{j-1} n}\Pr{\E_{j-1}} \pm n^{-2^{-11}\log pn}. 
	\end{align}
	As $\Pr{\E_s|\E_{s-1}} \ge 1-3p$ for all $s \in [j-1] \setminus \{i\}$ and $\Pr{\E_i|\E_{i-1}} \ge (2n)^{-1}$ by~\eqref{eq:prob-Aii}, we obtain
	\begin{align}\label{eq:Ej2}
		\Pr{\E_{j-1}} =  \Pr{\E_{0}}  \prod \limits_{s=1}^{j-1} \dfrac{\Pr{\E_s}}{\Pr{\E_{s-1}}} = \prod \limits_{s=1}^{j-1} \Pr{\E_s|\E_{s-1}}  \ge  \dfrac{(1-3p)^{j-2}}{2n}.
	\end{align}

	From~\eqref{eq:error-function},~\eqref{eq:asymptotics} and~\eqref{eq:Ej2} we obtain
	\begin{align*}
		 \dfrac{f_{j-1}\Pr{\E_{j-1}}}{(1-p)^{j-1} n} \ge \dfrac{f_0(1-3p)^{j-2}}{2n^2} \gg  n^{-2^{-11}\log pn},
	\end{align*}
	and hence, by~\eqref{eq:Ej1} we have
	\begin{align*}
		\Pr{\E_{j}} = \dfrac{1 \pm 3f_{j-1}}{(1-p)^{j-1} n}\Pr{\E_{j-1}}.
	\end{align*}
	This concludes the proof.
\end{proof}

Now we are ready to prove Theorem~\ref{thm:prob-non-edge}.

\begin{proof}[Proof of Theorem~\ref{thm:prob-non-edge}]
	Let $G$ be as in the statement and let $u,v \in V(G)$ be distinct vertices such that $uv \notin E(G)$.
	Let $v_i$ be the vertex selected in the $i$th step of the random greedy independent set process.
	It suffices to show that, for every $i,j$ with $0 \le i < j \le k$, we have $ \Pr{v_i = u, v_j = v} = (1+o(1))n^{-2}$.

	From now on, we fix $i,j$ with $0 \le i < j \le k$.
	For simplicity, we write $\E_t = \E_t^{i,j}(u,v)$ for all $t$.
	Note that
	\begin{align*}
		\Pr{v_i = u, \, v_j = v, \, \tau > j} = \Pr{\E_{j}} =  \Pr{\E_{0}}  \prod \limits_{t=1}^{j} \dfrac{\Pr{\E_t}}{\Pr{\E_{t-1}}} = \prod \limits_{t=1}^{j} \Pr{\E_t|\E_{t-1}}.
	\end{align*}
	In the last inequality above, we use that $\Pr{\E_0} = \Pr{\tau \ge 1} =1$ (by~\ref{item:P2}) and that $\E_t \se \E_{t-1}$ for all $t \in [j]$. 
	Together with Lemma~\ref{lem:prob-non-edge}, this implies that
	\begin{align*}
		\Pr{v_i = u, v_j = v, \tau > j} & = \dfrac{1+o(1)}{(1-p)^{i+j-2} n^2}  \cdot \prod_{t=1}^{i-1} \big ( 1 - 2(1\pm 3f_{t-1})p \big ) \prod_{t=i+1}^{j-1} \big ( 1 - ( 1\pm 3f_{t-1} )p \big ) \\
		& = \dfrac{1+o(1)}{(1-p)^{i+j-2} n^2}  \cdot \big ( 1 - 2(1\pm 3f_{k})p \big )^{i-1} \big ( 1 - ( 1\pm 3f_{k} )p \big )^{j-i-1}.
	\end{align*}
	It is routine to verify that for every $x \in (0,1/2)$, we have $1-x = e^{-x \pm 2x^2}$.
	Consequently, the right-hand side of the display above equals 
	\begin{align*}
		(1+o(1))n^{-2} \exp \Big ( p(i+j) - 2(1\pm 3f_k)pi -(1\pm 3f_k)p(j-i) \pm O(p+p^2(i+j)) \Big ),
	\end{align*}
	and hence
	\begin{align}\label{eq:prob-A-6}
		\Pr{v_i = u, v_j = v, \tau > j} & = (1+o(1))n^{-2} \exp \left ( \pm O(p+(p^2+f_kp)(i+j)) \right )\nonumber\\
		& = (1+o(1))n^{-2} \exp \left ( \pm O(f_kpk) \right ).
	\end{align}
	In the last equality, we simply use that $i + j = O(k)$ and that $p^2 k = O(p\log pn) = o(1)$.

	Now we claim that $f_kpk = o(1)$.
	Indeed, it follows from~\eqref{eq:bound-f} and $k = \eps 2^{-10}p^{-1}\log pn$ that 
	\begin{align*}
		f_kpk \le (pn)^{2^{-5}\eps} f_0 \log pn \le h \coloneqq 4(pn)^{2^{-5}\eps}  \log n \left (\dfrac{\log pn}{pn} + p \right )^{1/2} \log pn.
	\end{align*}
	Thus, it suffices to show that $h^2\to 0$ as $n\to \infty$.
	If $(pn)^{-1}\log pn \le p$, then 
	\begin{align*}
		h^2 = O\Big ((\log n)^4 n^{2^{-4}\eps}p^{1+2^{-4}\eps} \Big ) = O \big (n^{2^{-3}\eps}p \big ) = o(1),
	\end{align*}
	because $p \le n^{-\eps}$ by assumption.
	If $(pn)^{-1}\log pn \ge p$, then
	\begin{align*}
		h^2 = O \big ((\log n)^2 (\log pn)^{3} (pn)^{-1+ 2^{-4} \eps} \big ).
	\end{align*}
	Observe that $h^2\to 0$ if and only if $((\log n)^2 (\log pn)^3)^{\frac{1}{1-2^{-4}\eps}} \ll pn$. But now note that
	\begin{align*}
		 \dfrac{((\log n)^2 (\log pn)^{3})^{\frac{1}{1-2^{-4}\eps}} }{n} \le \dfrac{((\log n)^2 (\log pn)^{3})^{1+\eps/4} }{n} \ll  p.
	\end{align*}
	In the first inequality, we use that $(1-2^{-4}\eps )^{-1} \le 1 + \eps/4$ for all $\eps \in (0,1/2)$.
	In the second inequality, we use $(\log n)^{2+\eps}n^{-1}\le p$ by assumption.
	It follows from~\eqref{eq:prob-A-6} that $\Pr{v_i = u, v_j = v, \tau > j} = (1+o(1))n^{-2}$.
	This together with Theorem~\ref{thm:main-dem} completes the proof.
\end{proof}

\section{Proof of Lemma~\ref{lem:typical}}\label{sec:preliminaries}

In this section, we prove Lemma~\ref{lem:typical}. Our main tool is Chernoff's inequality.

\begin{lemma}[Chernoff's inequality]\label{lem:chernoff}
	Let $Y$ be a binomial random variable. For every $t \ge 0$, we have
	\begin{align*}
		\Pr{|Y - \Ex{Y}| \ge t} \le 2 \exp \left ( -\dfrac{t^2}{2\Ex{Y} + t} \right ).
	\end{align*}
\end{lemma}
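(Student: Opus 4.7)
The plan is to apply the classical exponential-moment (Chernoff) method to the upper and lower tails separately, and then combine the two one-sided bounds by the union bound.

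Writing $Y$ as a sum $Y = \sum_{i=1}^{n} X_i$ of i.i.d.\ Bernoulli$(q)$ indicators (so that $\mu := \Ex{Y} = nq$), I would first establish the moment-generating-function estimate $\Ex{e^{\lambda Y}} \le \exp(\mu(e^{\lambda}-1))$ for every $\lambda \in \mathbb{R}$. This follows from independence, which gives $\Ex{e^{\lambda Y}} = (1 - q + q e^{\lambda})^{n}$, together with the elementary inequality $1 + x \le e^{x}$ applied to $x = q(e^{\lambda}-1)$.

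For the upper tail I would choose $\lambda = \log(1 + t/\mu) \ge 0$ and apply Markov's inequality to the random variable $e^{\lambda Y}$; a short computation reduces the resulting bound to
\begin{equation*}
\Pr{Y \ge \mu + t} \;\le\; \exp\bigl(-\mu\, h(t/\mu)\bigr), \qquad \text{where } h(x) := (1+x)\log(1+x) - x.
\end{equation*}
The symmetric argument with $\lambda = \log(1 - t/\mu) \le 0$, valid for $t \in [0,\mu]$ (the lower-tail event is empty when $t > \mu$), yields the analogous bound $\Pr{Y \le \mu - t} \le \exp(-\mu\, h(-t/\mu))$.

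To pass from these one-sided estimates to the stated conclusion, I would invoke the elementary calculus inequalities $h(x) \ge x^{2}/(2 + 2x/3)$ for $x \ge 0$ and $h(-x) \ge x^{2}/2$ for $x \in [0,1]$. Combined with the trivial bound $2\mu + 2t/3 \le 2\mu + t$ for $t \ge 0$, these imply that each of the two one-sided probabilities is at most $\exp(-t^{2}/(2\mu + t))$; the union bound then contributes the factor of $2$.

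The only substantive step is the calculus inequality $h(x) \ge x^{2}/(2 + 2x/3)$, which I would verify by setting $g(x) := (2 + 2x/3)\, h(x) - x^{2}$ and checking that $g(0) = g'(0) = 0$ together with $g''(x) \ge 0$ for $x \ge 0$; everything else in the argument is routine manipulation of the binomial moment generating function.
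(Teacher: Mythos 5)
Your proof is correct. The paper itself states this Chernoff bound as a standard tool without proof, so there is no argument in the paper to compare against; your derivation via the moment generating function, the Bennett-type one-sided bounds $\exp(-\mu\,h(\pm t/\mu))$ with $h(x)=(1+x)\log(1+x)-x$, and the calculus inequalities $h(x)\ge x^2/(2+2x/3)$ and $h(-x)\ge x^2/2$ is the standard route and yields exactly the stated two-sided bound, since $2\mu+2t/3\le 2\mu+t$ and the lower-tail event is vacuous for $t>\mu$. The one step you gesture at rather than carry out, namely $g''(x)\ge0$ for $g(x)=(2+2x/3)h(x)-x^2$, does hold (one checks $g''(0)=0$ and $g'''(x)=\tfrac{4x/3}{(1+x)^2}\ge0$), so the argument is complete.
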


\begin{proof}[Proof of Lemma~\ref{lem:typical}]
	Let $\eps$, $n$ and $p$  as in the statement.
To simplify notation, for every set $S \se V(G_{n,p})$ of size at most $\eps 2^{-10}p^{-1} \log pn$, we set
\begin{align*}
	 \mu_{|S|} \coloneqq (1-p)^{|S|} n \qquad \text{and} \qquad E_{|S|} \coloneqq \Ex{|N^c(S)|} = (n-|S|)(1-p)^{|S|}.
\end{align*}

We start by first showing that $E_i \ge 4i\log n$ for every $i \in \big [\eps 2^{-10}p^{-1} \log pn \big ]$. 
This will be useful later when we apply Chernoff's inequality to prove that~\ref{item:P1} holds with high probability.
Note that $pn \ge (\log n)^{2+\eps} \gg  (\log n)^{\frac{2}{1-2^{-9}\eps}}$. By raising both sides to the power of $1-2^{-9}\eps$ and using that $\log n \ge \log np$, we obtain $(pn)^{1-2^{-9}\eps} \gg \log n \log np$, and hence 
\begin{align*}
	(pn)^{-2^{-9}\eps}\dfrac{p}{\log pn}\gg \dfrac{\log n}{n}.
\end{align*}
As $1-x \ge e^{-2x}$ for all $x \in (0,1/2)$ and $i \to e^{-2pi}/i$ is decreasing, for every $i \in \big [\eps 2^{-10}p^{-1} \log pn \big ]$, we have
\begin{align*}
	\dfrac{(1-p)^i}{i} \ge \dfrac{e^{-2pi}}{i} \ge (pn)^{-2^{-9}\eps}\dfrac{2^{10}p}{\eps\log pn}\gg \dfrac{\log n}{n}.
\end{align*}
This implies that
\begin{align}\label{eq:bound-e-i}
	E_i = (1-p)^i (n-i) \ge (8\log n - (1-p)^i)i \ge 4i\log n
\end{align}
for every $i \in \big [\eps 2^{-10}p^{-1} \log pn \big ]$, which proves our claim.

For $i \in \big [\eps 2^{-10}p^{-1} \log pn \big ]$, set $t = t(i) = 4\sqrt{iE_i \log n}$.
Now we claim that $t^2 \ge 8i(\log n)\max(2E_i, t)$.
Indeed, we have $t^2 = 16iE_i\log n$ by choice of $t$.
If $2E_i \ge t$, then the statement is trivial. 
Otherwise, this is equivalent to $t \ge 8i\log n$, and~\eqref{eq:bound-e-i} proves this claim.
By Chernoff's inequality combined with the union bound, it follows that the probability that $ \big | | N^c(S) | - E_{|S|} \big | \ge t$ for some set $S$ of size $i \le \eps 2^{-10}p^{-1} \log pn$ is at most 
	\begin{align*}
		n^i \exp \left ( -\dfrac{t^2}{2E_i + t} \right ) \le n^i \exp \left ( -\dfrac{t^2}{2\max(2E_i, t)} \right ) \le n^{-3i}.
	\end{align*}
	Since $|S| \le \sqrt{|S|n}$ implies that $|S|(1-p)^{|S|} \le \sqrt{|S|n(1-p)^{|S|}} = \sqrt{|S|\mu_{|S|}}$, we conclude that with probability at least $1-\sum_i n^{-3i} = 1-O(n^{-3})$ for every set $S \se V(G_{n,p})$ of size at most $\eps 2^{-10}p^{-1} \log pn$, we have
	\begin{align*}
		|N^c(S)| &= E_{|S|} \pm 4\sqrt{|S|E_{|S|}\log n} \nonumber \\
		& = \mu_{|S|}-(1-p)^{|S|}|S| \pm 4\sqrt{|S|\mu_{|S|}\log n} \nonumber \\
		& = \mu_{|S|} \pm 5\sqrt{|S|\mu_{|S|}\log n} =  \mu_{|S|} \left ( 1 \pm 5 \sqrt{\dfrac{|S|\log n}{\mu_{|S|}}} \right ).
	\end{align*}
	
	Thus,~\ref{item:P1} holds with probability at least $1-O(n^{-3})$, provided that $25\mu_{s}^{-1}s\log n \le f_{s}^2$ holds for every $s \in [\eps 2^{-10}p^{-1} \log pn]$.
	Indeed, note that
		\begin{align*}
			\dfrac{25s\log n}{n}\le \dfrac{\log n\log pn}{pn} \le f_0^2 = 16 \log^2 n \left ( \dfrac{\log pn}{pn} + p \right ).
		\end{align*}
		By multiplying both sides by $(1-p)^{-s}$, we obtain 
		\begin{align*}
			\dfrac{25s\log n}{\mu_{s}} = \dfrac{25s\log n}{n(1-p)^{s}}  \le \dfrac{f_0^2}{(1-p)^{s}} \le f_0^2 \dfrac{(1+16p)^{2s}}{(1-p)^{2s}} = f_{s}^2,
		\end{align*}
		and this proves~\ref{item:P1}.

	Finally, it remains to show that~\ref{item:P2} and~\ref{item:P3} hold with probability at least $1-O(n^{-3})$.
	Again by Chernoff's inequality, we obtain
	\begin{align*}
		\Pr{|d(v)-pn|\ge \dfrac{f_0pn}{3}} \le 2 \exp \left ( -\dfrac{(f_0pn)^2}{9(2pn + f_0pn)} \right ) \le \exp \big ( -\Omega(\log^2 n) \big)
	\end{align*}
for every $v \in V(G_{n,p})$.
Here, we use that $f_0 \le 1 $ and that $\log^2 n \le f_0^2pn$.
By taking the union bound over all vertices in $V(G_{n,p})$, we obtain that~\ref{item:P2} holds with probability at least $1-O(n^{-3})$.

For~\ref{item:P3}, again by Chernoff's inequality, for every pair of distinct vertices $u,v \in V(G_{n,p})$, we obtain
\begin{align*}
	\Pr{\big | | N(u) \cap N(v)| - p^2n \big|\ge \dfrac{\Delta_2}{3}} \le 2 \exp \left ( -\dfrac{\Delta_2^2}{9(2p^2n + \Delta_2)} \right ) \le 2 \exp \left ( -\dfrac{\Delta_2}{18} \right ) = o(n^{-5}).
\end{align*}
Here, we use that $\Delta_2 \ge 4p^2n$ and $\Delta_2 \ge 2^7 \log n$.
By taking the union bound over all pairs of vertices in $V(G_{n,p})$, it follows that~\ref{item:P3} holds with probability at least $1-O(n^{-3})$.
This completes the proof.
\end{proof}

\end{document}